\documentclass[reqno]{amsart}
\usepackage{amsmath,amssymb,amsthm}
\usepackage{hyperref}
\usepackage{geometry}
\usepackage{amsmath,amsfonts,amssymb,amsthm}
\usepackage{mathtools}
\usepackage{verbatim}
\usepackage{xcolor}
\usepackage{xfrac}
\usepackage{tikz-cd}

\newtheorem{Conjecture}{Conjecture}
\newtheorem{Question}[Conjecture]{Question}
\newtheorem{lem}{Lemma}[section]
\newtheorem{teo}[lem]{Theorem}
\newtheorem{pro}[lem]{Proposition}
\newtheorem{cor}[lem]{Corollary}

\newtheorem{rem}[lem]{Remark}

 \newtheorem{exam}[lem]{Example}

\newtheorem*{rem*}{Remark}
\newtheorem*{teo*}{Theorem}

\newcounter{claimcounter}
\numberwithin{claimcounter}{lem}

\usepackage{scalerel,stackengine}
\stackMath
\newcommand\reallywidehat[1]{%
\savestack{\tmpbox}{\stretchto{%
\scaleto{%
	\scalerel*[\widthof{\ensuremath{#1}}]{\kern-.6pt\bigwedge\kern-.6pt}%
	{\rule[-\textheight/2]{1ex}{\textheight}}
	}{\textheight}%
	}{0.5ex}}%
	\stackon[1pt]{#1}{\tmpbox}%
}

\newcommand{\myeq}[1]{\ensuremath{\stackrel{\text{#1}}{=}}}

\DeclareMathOperator{\GL}{GL}

\DeclareMathOperator{\im}{Im}

\newcommand{\F}{\mathbb{F}}
\newcommand{\M}{\mathcal{M}}

\newcommand{\Q}{\mathcal{Q}}

\renewcommand{\L}[0]{\mathcal{L}}

\DeclareMathOperator{\CR}{\rm{Crit}}

\renewcommand{\emph}[1]{{\bf #1}}
\date{July 2026}

  \title[Homomorphisms to the random module of finite dimension]{Average Numbers of Homomorphisms to Random Modules over Free Group Algebras}

\author{J. de la Nuez G\'onzalez}

\address{Korea Institute For Advanced Study}
 
\email{jnuezgonzalez@gmail.com}

\author{A. Jaikin-Zapirain}

\address{Instituto de Ciencias Matem\'aticas, CSIC-UAM-UC3M-UCM}
 
\email{andrei.jaikin@icmat.es}

\begin{document}

\begin{abstract}
Let $F$ be a finitely generated free group, and let $L$ be a finitely presented
$\mathbb{F}_q[F]$-module. We study the average number $\Lambda_L(n)$ of
$\mathbb{F}_q[F]$-module homomorphisms from $L$ to an
$\mathbb{F}_q[F]$-module of dimension $n$ over $\mathbb{F}_q$.

We show that, for all sufficiently large $n$, the quantity $\Lambda_L(n)$ is
given by a rational function of $q^n$ and satisfies
\[
\Lambda_L(n)
=
q^{\chi(L)n}
+
\sum_{N\in A(L)}
q^{\chi(L/N)n}\bigl(1+O(q^{-n})\bigr),
\]
where $\chi(L)$ denotes the Euler characteristic of $L$, and $A(L)$ is the
set of nonzero $\mathbb{F}_q[F]$-submodules $N$ of $L$ that have no nonzero
free quotients.

Our proof is based on a theory of partial modules that may be of independent interest and have further applications.
\end{abstract}

\maketitle

\section{Introduction}

Let $F$ be the free group on the free generating set
$\{x_1,\ldots,x_d\}$, and let
$h=(h_1,\ldots,h_t)$ be a tuple of elements of $F$.
We say that a tuple of elements of $F$ is \textbf{primitive} if it is
contained in a free generating set of $F$.

Given a finite group $G$, we define a map
\[
h_G\colon G^d\longrightarrow G^t
\]
by
\[
h_G(g_1,\ldots,g_d)
=
\bigl(h_1(g_1,\ldots,g_d),\ldots,
h_t(g_1,\ldots,g_d)\bigr).
\]
In other words, $h_G(g_1,\ldots,g_d)$ is the image of the tuple $h$
under the homomorphism $F\to G$ sending $x_i$ to $g_i$.
If $h$ is primitive, then $h_G$ is \textbf{measure-preserving}, meaning
that all its fibers have the same cardinality, for every finite group
$G$. We are interested in the converse implication.

\begin{Question}\label{separablefamily}
Let $F$ be a free group, and let $h$ be a tuple of elements of $F$.
For which families $\mathcal F$ of finite groups does the following
implication hold?
\[
h_G \text{ is measure-preserving for every }G\in\mathcal F
\quad\Longrightarrow\quad
h\text{ is primitive}.
\]
\end{Question}

A positive answer for the family of symmetric groups was given by
Puder and Parzanchevski in \cite{PP15}. They considered the average
number of common fixed points of $\{h_1,\ldots,h_t\}$ in a permutation
representation of $F$ of degree $n$. This number depends only on the
subgroup
\[
H=\langle h_1,\ldots,h_t\rangle
\]
and is denoted by $\Phi_{F,H}(n)$. They proved that, for all sufficiently
large $n$, the function $\Phi_{F,H}(n)$ agrees with a rational function
in $n$ and that
\begin{equation}\label{permutation}
\Phi_{F,H}(n)
=
n^{1-\operatorname{rk}(H)}
+
\sum_{J\in A(H)}
n^{1-\operatorname{rk}(J)}
\bigl(1+O(n^{-1})\bigr),
\end{equation}
where $A(H)$ is the set of nontrivial algebraic extensions $J$ of $H$;
that is, $H\lneq J$ and $H$ is not contained in a proper free factor
of $J$.

The case of Question~\ref{separablefamily} in which $\mathcal F$ is
the family of all finite soluble groups was studied in \cite{Ja22}.
In particular, it was shown that the answer is positive when the size
of the tuple $h$ is equal to the rank of $F$.

In \cite{EPS21}, Ernst-West, Puder, and Seidel considered the case in
which $h$ consists of a single nontrivial word $w$ and
\[
\mathcal F=\mathrm{GL}(q)
:=
\{\mathrm{GL}_n(q)\mid n\geq 1\},
\]
where $q$ is fixed. They used an approach similar to that of
\cite{PP15} and studied the average number of fixed vectors of $w$ in
a random representation of $F$ in $\mathrm{GL}_n(q)$. They denoted
this quantity by $\mathbb E_w[\mathrm{fix}]$.

They proved that, for all sufficiently large $n$,
$\mathbb E_w[\mathrm{fix}]$ is a rational function of $q^n$ and that
\begin{equation}\label{average}
\mathbb E_w[\mathrm{fix}]
=
c_{l(w),q}+O(q^{-n}),
\end{equation}
where
\[
l(w)
=
\bigl|C_F(w)/\langle w\rangle\bigr|
\]
and $c_{l,q}$ is the number of ideals of
\[
\mathbb F_q[x]/(x^l-1).
\]
Moreover, if $w$ is not a proper power, equivalently,
$C_F(w)=\langle w\rangle$, they also proved that
\begin{equation}\label{crit2}
\mathbb E_w[\mathrm{fix}]
=
2+|\CR_q^2(w)|q^{-n}+O(q^{-2n}),
\end{equation}
where $\CR_q^2(w)$ is the set of rank-two right submodules
$I\leq \mathbb F_q[F]$ containing $w-1$ as an imprimitive element;
that is,
\[
(w-1)\mathbb F_q[F]
\]
is not a direct summand of $I$.

In particular, this yields a positive answer to
Question~\ref{separablefamily} when $F$ has rank two and
$\mathcal F=\mathrm{GL}(q)$.

A homomorphism
\[
\phi\colon F\longrightarrow \mathrm{GL}_n(q)
\]
induces the structure of a right $\mathbb F_q[F]$-module on
$\mathbb F_q^n$, which we denote by $(\mathbb F_q^n)_\phi$.
Given a finitely generated right $\mathbb F_q[F]$-module $L$, define
\[
\Lambda_L(n)
=
\frac{1}{|\mathrm{GL}_n(q)|^d}
\sum_{\phi\colon F\to\mathrm{GL}_n(q)}
\left|
\operatorname{Hom}_{\mathbb F_q[F]}
\bigl(L,(\mathbb F_q^n)_\phi\bigr)
\right|.
\]
Thus, $\Lambda_L(n)$ is the average number of
$\mathbb F_q[F]$-module homomorphisms from $L$ to a random
$\mathbb F_q[F]$-module whose underlying $\mathbb F_q$-vector space
has dimension $n$.

Observe that, if $w\in F$, then
\begin{equation}\label{interpretation}
\mathbb E_w[\mathrm{fix}]
=
\Lambda_{\mathbb F_q[F]/(w-1)\mathbb F_q[F]}(n).
\end{equation}

In this paper, we study $\Lambda_L(n)$ for an arbitrary finitely
presented right $\mathbb F_q[F]$-module $L$. Our approach is strongly
influenced by \cite{PP15} and \cite{EPS21}.

A right $\mathbb F_q[F]$-module is called \textbf{algebraic} if it has
no nontrivial free direct summand. Equivalently, it does not admit a
surjection onto a nontrivial free module. For a right
$\mathbb F_q[F]$-module $L$, we denote by $A(L)$ the set of its
nontrivial algebraic submodules.

Since every submodule of a free $\mathbb F_q[F]$-module is free, the
module $L$ has a maximal free quotient. Moreover, the smallest
submodule $N\leq L$ such that $L/N$ is free is algebraic. Consequently,
$L$ is free if and only if $A(L)$ is empty.

If $L$ is finitely presented, then
\[
L\cong \mathbb F_q[F]^r/U,
\]
where
\[
U\cong \mathbb F_q[F]^s
\]
is free. The Euler characteristic of $L$ is defined by
\[
\chi(L)=r-s.
\]
This definition is independent of the chosen presentation of $L$;
see \cite{Co64}.

\begin{teo}\label{teo:average}
Let $L$ be a finitely presented right $\mathbb F_q[F]$-module. Then,
for all sufficiently large $n$, the function $\Lambda_L(n)$ is a
rational function of $q^n$.

Moreover, the set $A(L)$ is finite and
\begin{equation}\label{representation}
\Lambda_L(n)
=
q^{\chi(L)n}
+
\sum_{N\in A(L)}
q^{\chi(L/N)n}
\bigl(1+O(q^{-n})\bigr).
\end{equation}
\end{teo}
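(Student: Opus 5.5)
Following the strategy of \cite{PP15} and \cite{EPS21}, we will rewrite $\Lambda_L(n)$ as a sum over finite combinatorial objects, each contributing a rational function of $q^n$, and then reorganize the sum into one indexed by submodules of $L$.

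\textbf{Step 1: reduction to homomorphisms from $\mathbb F_q[F]^r$.} Fix a presentation $L=\mathbb F_q[F]^r/U$ with $U$ free of rank $s$. A homomorphism $L\to(\mathbb F_q^n)_\phi$ is the same as a tuple $(v_1,\dots,v_r)\in(\mathbb F_q^n)^r$ killed by the generators of $U$. Thus $\Lambda_L(n)$ counts pairs $(\phi,\bar v)$ such that the induced map $\psi\colon\mathbb F_q[F]^r\to(\mathbb F_q^n)_\phi$ satisfies $U\subseteq\ker\psi$, normalized by $|\mathrm{GL}_n(q)|^d$.

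\textbf{Step 2: partial modules.} For each such pair, we look at the image of a large but finite part of $\mathbb F_q[F]^r$. This is the span of $\psi$ applied to $e_i\cdot w$, where $w$ ranges over a finite subtree $T$ of the Cayley graph. It defines a \emph{partial module}: a finite-dimensional space $V$ with partially defined injective linear maps for the $x_i$.

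The key counting lemma, analogous to the core-graph lemma of \cite{PP15}, is the following. For a fixed partial module $P$ with $\dim V=m$ and with each $x_i$ defined on a subspace of dimension $m_i$, the normalized number of $\phi$ extending some embedding of $P$ into $\mathbb F_q^n$ is
\[
\frac{|\mathrm{Inj}(\mathbb F_q^m,\mathbb F_q^n)|\prod_i|\mathrm{GL}_{n-m_i}(q)|}{|\mathrm{GL}_n(q)|^d}
= q^{(m-\sum_i m_i)n}\bigl(1+O(q^{-n})\bigr).
\]
This quantity is a rational function of $q^n$. We then group the pairs $(\phi,\bar v)$ by the quotient $M=\mathbb F_q[F]^r/\ker\psi\supseteq$, which is a quotient of $L$. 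Each such $M$ has dimension at most $n$, so only finitely many partial modules are relevant.

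\textbf{Step 3: Möbius inversion and finiteness.} We organize the relevant partial modules by the submodule $N\le L$ that they "generate". Concretely, each finite partial module $P$ spanned by $\bar v$ determines the submodule $N_P\le L$ generated by the relations forced inside $P$, and $L/N_P$ is the universal module completing $P$. For $N_P$, the exponent $m-\sum_i m_i$ equals the Euler characteristic of a presentation of $L/N_P$ read off from $P$. Hence $m-\sum_i m_i\le\chi(L/N_P)$, with equality exactly when $P$ is a "core" presentation of $L/N_P$.

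Summing over all $P$ with a given $N$, and inverting over extensions (as with algebraic extensions in \cite{PP15}), we expect the contribution of $N$ to be $q^{\chi(L/N)n}(1+O(q^{-n}))$ when $N$ is algebraic. The contributions of non-algebraic $N$ should cancel or be absorbed: if $N$ has a free quotient, it factors through a smaller algebraic submodule plus a free part, and the free part contributes no extra leading term. The term $N=0$ gives $q^{\chi(L)n}$.

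Finiteness of $A(L)$ should come from a bound on the dimension of the relevant partial modules. An algebraic $N$ must be "visible" inside a core of bounded size, bounded in terms of the presentation of $L$. This is analogous to the fact that the algebraic extensions of $H$ all appear as quotients of the Stallings core graph. Every algebraic submodule then arises from one of finitely many finite partial-module quotients of the core of $U$ inside $\mathbb F_q[F]^r$.

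\textbf{Expected main obstacle.} The main difficulty is Step 3. We must show that each algebraic $N$ corresponds to exactly one maximal-exponent family of partial modules. We must also show that all other contributions either have strictly smaller exponent or reassemble into the stated terms. This requires a good theory of "free extension" of partial modules: a partial module generating a free module contributes trivially, and extensions by free partial pieces preserve the Euler characteristic.

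I would first try to prove a structural lemma. It would say that every finitely generated partial module $P$ with completion $L/N$ decomposes, after adding free letters, so that $\chi(P)\le\chi(L/N)$, with equality iff $P$ is free over a core. This lemma would be the module-theoretic replacement for Stallings folding.
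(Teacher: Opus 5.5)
Your outline has the right ingredients: partial modules, counting extensions of partial isomorphisms, and M\"obius inversion in the style of Puder--Parzanchevski. But there is a genuine gap. The steps that carry the theorem are only announced, and the bookkeeping of Steps~2--3 is not set up in a way that can work.

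Rationality needs a finite index set that does not depend on $n$. Grouping the pairs $(\phi,\bar v)$ by the finite module $\mathbb F_q[F]^r/\ker\psi$ does not give one, because its dimension ranges up to $n$. In addition, ``the partial module spanned by $\bar v$'' is never pinned down. The missing idea is to fix once and for all a finite-dimensional partial module $\mathcal M$ with $U(\mathcal M)\cong L$ (Proposition~\ref{presentationpartial}, which is where finite presentation and Lewin's Schreier bases are used). Then $\operatorname{Hom}_R(L,V)=\operatorname{Hom}(\mathcal M,V)$ for every $R$-module $V$, and each homomorphism factors through its kernel $\mathcal N$, a partial submodule of $\mathcal M$, as an \emph{injective homomorphism} $\mathcal M/\mathcal N\to V$. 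It is injective homomorphisms, not embeddings with the induced partial structure, whose average number is the exact product $q^{\chi(\mathcal M/\mathcal N)n}T_{n,m}/\prod_{x}T_{n,m_x}$, with $m=m(\mathcal M/\mathcal N)$ and $m_x=m_x(\mathcal M/\mathcal N)$. Since the poset of partial submodules of $\mathcal M$ is finite and independent of $n$, rationality follows. (Your displayed count also drops the factors $q^{m_i(n-m_i)}$ from the pointwise stabilizers. As written it is not $q^{(m-\sum_i m_i)n}(1+O(q^{-n}))$.)

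Step~3 then needs two nontrivial facts that you only expect.

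\textbf{Exact cancellation of non-algebraic contributions.} These must cancel exactly, not merely be absorbed, because the main term $q^{\chi(L)n}$ carries no error term (for $L$ free the formula is exact). In the paper this is the vanishing of $R_n=\Phi_n*\mu$ on intervals $[\mathcal N_1,\mathcal N_2]$ with $\mathcal N_2/\mathcal N_1$ not algebraic. It is deduced from $\Phi_n(\mathcal N_1,\mathcal N_2)=\Phi_n(\mathcal N_1,\mathcal N_3)$ when $\mathcal N_3/\mathcal N_2$ is free, that is, from the splitting of $0\to U(\mathcal N_2/\mathcal N_1)\to U(\mathcal N_3/\mathcal N_1)\to U(\mathcal N_3/\mathcal N_2)\to 0$. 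Left exactness there (Corollary~\ref{submod2}) is not formal. It rests on Proposition~\ref{freealgebraic}(a): a finitely generated submodule of $U(\mathcal M)$ meeting $M$ trivially is free, proved through the elementary-expansion criterion for $U(\gamma)$ to be an isomorphism. The same proposition gives part (b): every algebraic submodule of $L$ is generated by its intersection with $M$. This is exactly your ``visible inside a core'', and it is what yields finiteness of $A(L)$ and the bijection $N\mapsto\mathcal M^{N\cap M}$ between $A(L)$ and $A(\mathcal M)$, compatible with quotients. You assert it only by analogy with Stallings cores.

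\textbf{Size of the algebraic contributions.} One needs $C_n=\mu*\Phi_n*\mu$ to satisfy $C_n(\mathcal N_1,\mathcal N_2)=O(q^{-n(\chi(\mathcal N_2)+1)})$ whenever $\mathcal N_2/\mathcal N_1$ is nontrivial and algebraic. The paper obtains this in three moves. It expands $T_{n,m}/\prod_xT_{n,m_x}$, via filtrations, into a signed sum over tuples of projective points. It identifies $C_n$ with the subsum over tuples that generate $\mathcal N_2/\mathcal N_1$. It then uses that a nontrivial algebraic module $\mathcal L$ cannot be generated by $\chi(\mathcal L)$ elements.

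Your proposed structural lemma cannot replace this. If $L/N_P$ is the enveloping module of $P$, then $m-\sum_i m_i=\chi(L/N_P)$ always, so an inequality with ``equality iff core'' carries no information. The error terms are controlled by minimal numbers of generators of algebraic quotients, not by comparing Euler characteristics.
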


Let us relate this result to the asymptotic formula
\eqref{average}. Define
\[
f(L)
=
\begin{cases}
\displaystyle
\min\{\chi(N)\mid N\in A(L)\},
& \text{if }A(L)\neq\varnothing,\\[6pt]
+\infty,
& \text{if }L\text{ is free},
\end{cases}
\]
and set
\[
\CR(L)
=
\{N\in A(L)\mid \chi(N)=f(L)\}.
\]
Since
\[
\chi(L/N)=\chi(L)-\chi(N),
\]
the conclusion of Theorem~\ref{teo:average} can be reformulated as
\begin{equation}\label{averagecrit}
\Lambda_L(n)
=
q^{\chi(L)n}
+
|\CR(L)|q^{(\chi(L)-f(L))n}
\bigl(1+O(q^{-n})\bigr).
\end{equation}

If $1\neq w\in F$ and
\[
L=\mathbb F_q[F]/(w-1)\mathbb F_q[F],
\]
then
\[
\chi(L)=f(L)=0
\qquad\text{and}\qquad
|\CR(L)|=c_{l(w),q}-1;
\]
see \cite[Lemma~4.2]{EPS21}. Therefore, by
\eqref{interpretation}, formula \eqref{averagecrit} implies
\eqref{average}.

\section*{Acknowledgments}

The work is partially supported by the grants  {\it PID2020-114032GB-I00}, {\it PID2024-155800NB-C33} and {\it EUR2025-164928} of the Ministry of Science, Innovation and Universities of Spain. The first author has been supported by the Mid-Career Researcher Program (RS-2023-00278510) through the National Research Foundation funded by the government of Korea and by the KIAS individual grant SP084001.

\section{Partial modules}
 
Let $K$ be a field, let $F$ be the free group freely generated by a set
$X$, and let $R=K[F]$ be the group algebra of $F$ over $K$. In this
section, we introduce the notion of a partial $R$-module and describe
its main properties.

For simplicity of exposition, we assume that $X$ is finite, although
most of the results can be easily adapted to the case in which $X$ is
infinite.

\subsection{Schreier transversals}

In this subsection, we recall the notions of Schreier transversals and
Schreier generators introduced by Lewin in \cite{Lew69}.

Let $E$ be a free right $R$-module with free basis
$\{e_\lambda\}_{\lambda\in\Lambda}$. The elements of the set
\[
\{e_\lambda\cdot f\mid f\in F,\ \lambda\in\Lambda\}
\]
are called \textbf{monomials} of $E$.

Suppose that $f\in F$ is written in reduced form as
\[
f=x_1^{\varepsilon_1}\cdots x_\ell^{\varepsilon_\ell},
\qquad
x_i\in X,\quad \varepsilon_i\in\{\pm1\}.
\]
For $0\leq i\leq \ell$, we set
\[
{}^{(i)}\!f
=
x_1^{\varepsilon_1}\cdots x_i^{\varepsilon_i}
\qquad\text{and}\qquad
f^{(i)}
=
x_{i+1}^{\varepsilon_{i+1}}\cdots
x_\ell^{\varepsilon_\ell}.
\]
In particular,
\[
{}^{(0)}\!f=f^{(\ell)}=1
\qquad\text{and}\qquad
{}^{(\ell)}\!f=f^{(0)}=f.
\]
As usual, $\ell$ is called the \textbf{length} of $f$. The length of
the monomial $e_\lambda\cdot f$ is defined to be the length of $f$.

The monomials form a $K$-basis of $E$. If
\[
u=\sum_{i=1}^r k_i m_i,
\]
where $0\neq k_i\in K$ and the $m_i$ are distinct monomials, then the
\textbf{length} of $u$ is the maximum of the lengths of the monomials
$m_i$.

Let $D$ be an $R$-submodule of $E$. A
\textbf{partial Schreier basis for $E$ modulo $D$ with respect to
$\{e_\lambda\}_{\lambda\in\Lambda}$} is a set $B$ of monomials of $E$
such that:
\begin{enumerate}
    \item $B$ is $K$-linearly independent modulo $D$;
    \item if $b=e_\lambda\cdot f$ belongs to $B$, then
    \[
    e_\lambda\cdot{}^{(i)}\!f\in B
    \]
    for every $0\leq i\leq \ell$.
\end{enumerate}

A partial Schreier basis $B$ is called a \textbf{Schreier basis} if,
in addition, its image spans $E/D$ as a $K$-vector space. In this
case, the $K$-linear span
\[
T=\langle B\rangle_K
\]
is a transversal for $D$ in $E$; that is,
\[
E=T\oplus D
\]
as $K$-vector spaces. We denote by
\[
\pi_B\colon E\longrightarrow T
\]
the $K$-linear projection determined by the condition
\[
u+D=\pi_B(u)+D
\]
for every $u\in E$.

Every partial Schreier basis can be extended to a Schreier basis; see
\cite[Lemma~1]{Lew69}.

\begin{teo}[{\cite[Theorem~1]{Lew69}}]\label{schreier}
Let $E$ be a free right $R$-module with basis
$\{e_\lambda\}_{\lambda\in\Lambda}$, let $D$ be an $R$-submodule of
$E$, and let $B$ be a Schreier basis for $E$ modulo $D$. Consider the
set
\[
\begin{split}
S={}&
\bigl\{
e_\lambda-\pi_B(e_\lambda)
\mathrel{}\big|\mathrel{}
\lambda\in\Lambda,\ 
e_\lambda-\pi_B(e_\lambda)\neq 0
\bigr\}
\\
&\cup
\bigl\{
b\cdot x-\pi_B(b\cdot x)
\mathrel{}\big|\mathrel{}
b\in B,\ x\in X,\ 
b\cdot x-\pi_B(b\cdot x)\neq 0
\bigr\}.
\end{split}
\]
Then $D$ is freely generated as a right $R$-module by the set $S$.
\end{teo}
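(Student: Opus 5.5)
We prove Theorem~\ref{schreier} in the spirit of the Nielsen--Schreier argument: first show that $S\subseteq D$ and that $S$ generates $D$, then show that $S$ is free over $R$ using a leading-monomial argument.

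\emph{Membership and generation.} Since $u-\pi_B(u)\in D$ for every $u\in E$, we have $S\subseteq D$. Let $D'$ be the submodule generated by $S$. We first show by induction on length that every monomial $m$ lies in $T+D'$. For $m=e_\lambda$ this is immediate, since $e_\lambda=\pi_B(e_\lambda)+(e_\lambda-\pi_B(e_\lambda))$. Now write $m=m'\cdot y$ with $y\in X^{\pm1}$ and $m'$ of smaller length. By induction, $m'\in T+D'$, so $m\in T\cdot y+D'$. It therefore suffices to show $b\cdot y\in T+D'$ for $b\in B$.

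For $y=x\in X$, this holds because $b x-\pi_B(bx)$ is in $S$ or is zero. For $y=x^{-1}$, put $c=bx^{-1}$ and write $c=\pi_B(c)+d$ with $d\in D$; we must show $d\in D'$. We have $b=(c-\pi_B(c))x$, so
\[
d\,x=b-\pi_B(c)x .
\]
Since $\pi_B(c)x\in \langle B\cdot X\rangle_K$, this lies in $T+D'$. Hence $dx\in (T+D')\cap D$.

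The remaining step is to see that $(T+D')\cap D=D'$. If $t+d'\in D$ with $t\in T$ and $d'\in D'\subseteq D$, then $t\in T\cap D=0$. Thus $dx\in D'$, and so $d\in D'$ after right multiplication by $x^{-1}$. It follows that $E=T+D'$. Intersecting with $D$ and using $T\cap D=0$ gives $D=D'$.

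\emph{Freeness.} This is the main obstacle. We must show that if $\sum_{s\in S} s\,r_s=0$ with $r_s\in R$ almost all zero, then all $r_s=0$. Assign to each $s\in S$ its \emph{distinguished monomial}: $e_\lambda$ for the first kind and $b\cdot x$ for the second kind. These are distinct monomials that do not lie in $B$; the second kind is not in $B$ because otherwise $\pi_B(bx)=bx$ and $s=0$. Each $s$ equals its distinguished monomial minus a combination of elements of $B$.

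We then argue by a Nielsen-type reduction. Consider the monomials $\mu\cdot f$ appearing in the products $s\,r_s$, where $\mu$ is the distinguished monomial of $s$ and $f$ ranges over the support of $r_s$. Choose such a term of maximal length, with care to handle cancellation; note that $bx\cdot f$ may reduce if $f$ begins with $x^{-1}$. The Schreier (prefix-closed) property of $B$ is what allows one to show that a suitably chosen maximal term $\mu f$ cannot be cancelled. Such cancellation would require either another distinguished product, which is excluded by uniqueness of the factorization of a reduced monomial as (longest prefix in $B$)$\cdot x\cdot$(rest), or a term $b'f'$ with $b'\in B$ of the same length. Terms of the latter kind are ruled out by a length comparison, since elements of $\pi_B(\cdot)$ in the relevant positions are shorter or absorbed via the prefix property.

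Concretely, I would first try the following: for a monomial $e_\lambda g$ not in $B$, let $b$ be its longest prefix lying in $B$. Then $e_\lambda g=b\,x^{\pm1}\,g'$ in reduced form, and such monomials are uniquely attributable to a single $s$ when the exponent is $+1$. The case of exponent $-1$ is handled by rewriting $b x^{-1}$ through the Schreier generator attached to $bx^{-1}\cdot x$ when $bx^{-1}\in B$. If this bookkeeping becomes unwieldy, the fallback is to cite the standard fact that Schreier transversals yield free bases (Lewin's original argument) after verifying generation as above.
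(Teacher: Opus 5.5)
The paper does not prove this statement; it quotes Lewin's Theorem~1. Your proposal therefore has to stand on its own. Your generation half does: the induction giving $E=T+D'$, together with $(T+D')\cap D=D'$, is correct and does not even use prefix-closure. One slip: it should read $b=cx=(\pi_B(c)+d)x$, not $b=(c-\pi_B(c))x$.

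The freeness half, which is the real content of the theorem, is not established. There are two concrete problems.

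First, terms coming from $\pi_B(\cdot)$ cannot be excluded on the grounds that they are shorter. The support of $\pi_B(bx)$ may contain elements of $B$ of any length: for $F=\langle x\rangle$, $E=eR$, $D=(x^6-1)R$ and $B=\{ex^{-5},\dots,e\}$, one has $\pi_B(ex)=ex^{-5}$. What does work is a comparison on the coefficients. Let $f$ have maximal length among all elements of all $\operatorname{supp}(r_s)$, and let $\mu$ be the distinguished monomial of $s$. Suppose that $s$ is of the first kind, or that $\mu=bx$ with $f$ not beginning with $x^{-1}$. Then $\mu f$ is reduced. Any coincidence $\mu f=b''f''$ with $b''\in B$, or $\mu f=\mu'f'$ with $(\mu',f')\neq(\mu,f)$, where $|f''|,|f'|\leq|f|$, makes $\mu$ a prefix of an element of $B$. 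This contradicts prefix-closure, since $\mu\notin B$.

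Second, this leaves exactly the case in which every maximal-length coefficient of every $bx-\pi_B(bx)$ begins with $x^{-1}$. That is precisely where cancellation in $F$ enters, and your proposal does not handle it. There the distinguished monomial cancels and may even fall into $B$. Take $X=\{x,y\}$, $E=eR$, $E/D=K^2$ with $x$ swapping the two basis vectors and $y$ acting trivially, and $B=\{e,ex\}$. Then $(ex^2-e)x^{-1}=ex-ex^{-1}$, whose only monomial outside $B$ comes from the $\pi_B$-part.

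Dually, a monomial $bx^{-1}g'$ whose longest prefix in $B$ is $b$ is attached to no element of $S$. By the choice of $b$ we have $bx^{-1}\notin B$, so your rewriting ``when $bx^{-1}\in B$'' never applies. One only has $bx^{-1}-\pi_B(bx^{-1})=-\sum_{b'}k_{b'}(b'x-\pi_B(b'x))x^{-1}$, where $\pi_B(bx^{-1})=\sum_{b'}k_{b'}b'$. Deciding which of these terms owns the monomial, and showing that it cannot cancel, is the whole difficulty. Falling back on Lewin is legitimate (it is what the paper does), but then you have proved only generation.

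A way to close the gap without leading terms is as follows. Put $L=E/D$, let $P$ be free on symbols $\epsilon_b$ $(b\in B)$, and let $\rho\colon P\to L$ send $\epsilon_b\mapsto b+D$.
\begin{itemize}
\item The augmentation ideal $\omega$ of $R$ is free on $\{x-1\}_{x\in X}$. Hence the exact sequence $0\to L\otimes_K\omega\to L\otimes_KR\to L\to 0$ (diagonal action) shows that $\ker\rho$ is free on the elements $\delta_{b,x}=\epsilon_bx-\sum_{b'}k_{b,x,b'}\epsilon_{b'}$, where $\pi_B(bx)=\sum_{b'}k_{b,x,b'}b'$.
\item The map $\theta\colon P\to E$, $\epsilon_b\mapsto b$, sends $\delta_{b,x}$ to $bx-\pi_B(bx)$.
\item Because $B$ is prefix-closed, $\ker\theta$ (which lies in $\ker\rho$) is freely generated by the $\delta_{b,x}=\epsilon_bx-\epsilon_{bx}$ with $bx\in B$, that is, by the edges of the trees spanned by $B$.
\item $\operatorname{coker}\theta$ is free on the classes of the $e_\lambda\notin B$.
\end{itemize}
Now apply the snake lemma to the morphism of extensions $(\ker\rho\to P\to L)\to(D\to E\to L)$. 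It shows that the nonzero $bx-\pi_B(bx)$ freely generate a submodule $D_0$ of $D$. It also shows that $D/D_0$ is free on the classes of the nonzero $e_\lambda-\pi_B(e_\lambda)$. Together these give exactly the freeness of $S$.
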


    \subsection{The definition of partial $R$-modules}

A \textbf{(right) partial $R$-module} is a tuple
\[
\mathcal M
=
\bigl(M,(\phi_x\colon U_x\to V_x)_{x\in X}\bigr),
\]
where $M$ is a $K$-vector space, $U_x$ and $V_x$ are $K$-subspaces of
$M$, and $\phi_x\colon U_x\to V_x$ is a $K$-linear isomorphism for
every $x\in X$. The maps $\phi_x$ represent partial multiplication by
the elements of $X$:
\[
u\cdot x=\phi_x(u)
\qquad\text{for }u\in U_x.
\]
We set
\[
m(\mathcal M)=\dim_K M
\qquad\text{and}\qquad
m_x(\mathcal M)=\dim_K U_x
\quad (x\in X).
\]
We call $M$ the \textbf{base space} of $\mathcal M$.

Let
\[
\mathcal N
=
\bigl(N,(\psi_x\colon W_x\to Z_x)_{x\in X}\bigr)
\]
be another partial $R$-module. We say that $\mathcal N$ is a
\textbf{partial submodule} of $\mathcal M$ if $N$ is a $K$-subspace of
$M$ and, for every $x\in X$,
\[
W_x=N\cap U_x,\qquad Z_x=N\cap V_x,
\]
and $\psi_x$ is the restriction of $\phi_x$ to $W_x$. In particular,
the restriction of $\phi_x$ maps $N\cap U_x$ isomorphically onto
$N\cap V_x$.

In this situation, there is an associated \textbf{partial quotient
module}
\[
\mathcal M/\mathcal N
=
\bigl(M/N,
(\overline{\phi}_x\colon\overline U_x\to\overline V_x)_{x\in X}
\bigr),
\]
where
\[
\overline U_x=(U_x+N)/N,\qquad
\overline V_x=(V_x+N)/N,
\]
and
\[
\overline{\phi}_x(u+N)=\phi_x(u)+N
\qquad (u\in U_x).
\]

A subset $S\subseteq M$ \textbf{generates} $\mathcal M$ if $\mathcal
M$ is the smallest partial submodule of itself containing $S$. We
denote this by
\[
\mathcal M=\langle S\rangle.
\]

If
\[
\mathcal L
=
\bigl(L,(\phi_x\colon L\cap U_x\to L\cap V_x)_{x\in X}\bigr)
\]
and
\[
\mathcal N
=
\bigl(N,(\phi_x\colon N\cap U_x\to N\cap V_x)_{x\in X}\bigr)
\]
are partial $R$-submodules of $\mathcal M$, then
\[
\mathcal L\cap\mathcal N
:=
\bigl(
L\cap N,
(\phi_x\colon L\cap N\cap U_x\to L\cap N\cap V_x)_{x\in X}
\bigr)
\]
is also a partial $R$-submodule. However, the base space of
$\langle L,N\rangle$ may be strictly larger than $L+N$.

A \textbf{homomorphism} between two partial $R$-modules
\[
\mathcal M
=
\bigl(M,(\phi_x\colon U_x\to V_x)_{x\in X}\bigr)
\quad\text{and}\quad
\mathcal N
=
\bigl(N,(\psi_x\colon W_x\to Z_x)_{x\in X}\bigr)
\]
is a $K$-linear map $\alpha\colon M\to N$ such that, for every
$x\in X$,
\[
\alpha(U_x)\subseteq W_x
\]
and
\[
\psi_x(\alpha(u))=\alpha(\phi_x(u))
\qquad\text{for every }u\in U_x.
\]
By an abuse of notation, we also write
\[
\alpha\colon\mathcal M\to\mathcal N.
\]
Observe that
\[
\ker\alpha
=
\bigl(
\ker\alpha,
(\phi_x\colon\ker\alpha\cap U_x
\to\ker\alpha\cap V_x)_{x\in X}
\bigr)
\]
is a partial submodule of $\mathcal M$. In general, however,
$\mathcal M/\ker\alpha$ need not be isomorphic to a partial submodule
of $\mathcal N$.

Given a partial $R$-module
\[
\mathcal M
=
\bigl(M,(\phi_x\colon U_x\to V_x)_{x\in X}\bigr),
\]
a homomorphism of partial $R$-modules
\[
\alpha_{\mathcal M}\colon\mathcal M\to U(\mathcal M)
\]
is called a \textbf{universal enveloping map} of $\mathcal M$ if
$U(\mathcal M)$ is a right $R$-module and the following universal
property holds: for every homomorphism
\[
\beta\colon\mathcal M\to L
\]
from $\mathcal M$ to a right $R$-module $L$, there exists a unique
$R$-homomorphism
\[
\gamma\colon U(\mathcal M)\to L
\]
such that
\[
\beta=\gamma\circ\alpha_{\mathcal M}.
\]
We call $U(\mathcal M)$ the \textbf{universal enveloping module} of
$\mathcal M$.

The construction of $U(\mathcal M)$ is straightforward. Let
$\{e_\lambda\mid\lambda\in\Lambda\}$ be a $K$-basis of $M$, and let
$E$ be the free right $R$-module with free basis
$\{e_\lambda\mid\lambda\in\Lambda\}$. We also denote by $M$ the
$K$-linear span of these elements in $E$. Let $D$ be the
$R$-submodule of $E$ generated by
\[
\{m\cdot x-\phi_x(m)\mid x\in X,\ m\in U_x\}.
\]
Then
\begin{equation}\label{constrenvelop}
U(\mathcal M)=E/D,
\end{equation}
and the map
\[
\alpha_{\mathcal M}\colon\mathcal M\to U(\mathcal M)
\]
induced by the inclusion $M\hookrightarrow E$ is a universal
enveloping map. This proves the existence assertion in the following
proposition.

\begin{pro}\label{universalenveloping}
Let $\mathcal M$ be a partial $R$-module. Then its universal
enveloping module $U(\mathcal M)$ exists and is unique up to a unique
$R$-isomorphism compatible with the universal enveloping maps.
Moreover, $\alpha_{\mathcal M}$ is injective on the base space $M$.
If $\mathcal M$ is finite-dimensional over $K$, then
\[
\chi\bigl(U(\mathcal M)\bigr)
=
m(\mathcal M)-\sum_{x\in X}m_x(\mathcal M).
\]
\end{pro}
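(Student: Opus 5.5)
Uniqueness is formal: if $\alpha\colon\mathcal M\to U$ and $\alpha'\colon\mathcal M\to U'$ are both universal, the universal properties give $R$-maps $\gamma\colon U\to U'$ and $\gamma'\colon U'\to U$ with $\gamma\alpha=\alpha'$ and $\gamma'\alpha'=\alpha$. Then $\gamma'\gamma\alpha=\alpha$. By the uniqueness clause applied to $\alpha$ itself, $\gamma'\gamma=\mathrm{id}$, and symmetrically $\gamma\gamma'=\mathrm{id}$. Existence was shown above via $U(\mathcal M)=E/D$, so it remains to prove injectivity on $M$ and the Euler characteristic formula. For both I will use Lewin's Theorem~\ref{schreier} with a carefully chosen Schreier basis.

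For injectivity, it suffices to exhibit one homomorphism of partial modules $\beta\colon\mathcal M\to L$ into a genuine $R$-module with $\beta$ injective on $M$, since $\beta=\gamma\circ\alpha_{\mathcal M}$ then forces $\alpha_{\mathcal M}$ to be injective. The plan is to extend each partial isomorphism $\phi_x\colon U_x\to V_x$ to a full linear automorphism of a larger space. If $M$ is finite-dimensional, $\dim U_x=\dim V_x$, so complements of $U_x$ and $V_x$ in $M$ have equal dimension. Choose any linear isomorphism between these complements and glue it to $\phi_x$ to get $\tilde\phi_x\in\GL(M)$. Letting $x$ act by $\tilde\phi_x$ makes $M$ itself an $R$-module, and the identity map is a homomorphism. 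In the infinite-dimensional case the complements may have different dimensions, so instead I embed $M$ into $M\oplus W$ with $W$ of large enough dimension that both complements become isomorphic, and extend in the same way.

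For the Euler characteristic, let $M$ be finite-dimensional and fix a basis $\{e_\lambda\}$ of $M$. Then $E$ is free of rank $m(\mathcal M)$, and $D$ is generated by the $\sum_x m_x(\mathcal M)$ elements $u_{x,i}\cdot x-\phi_x(u_{x,i})$, where $u_{x,i}$ runs over a basis of $U_x$. The task is to show $D$ is free of rank exactly $\sum_x m_x$, giving $\chi=m-\sum_x m_x$. Since $M\cap D=0$ by injectivity, the set $B_0=\{e_\lambda\}$ is a partial Schreier basis. I would extend it to a Schreier basis $B$ of $E$ modulo $D$ (Lewin, Lemma~1) and count the Schreier generators $S$ of Theorem~\ref{schreier}. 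Generators of the form $e_\lambda-\pi_B(e_\lambda)$ vanish. The remaining ones are $b\cdot x-\pi_B(b\cdot x)$, and I expect to show that exactly $m_x$ of them are nonzero for each $x$.

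This count is the main obstacle, and it depends on choosing $B$ to match the geometry of $\mathcal M$. First choose the basis of $M$ adapted to $U_x$, which is impossible simultaneously for all $x$, so this needs care. I would try to exploit the fact that $E/D$ is the free product-type amalgam whose elements have normal forms $m\cdot w$ with $w$ reduced avoiding the partially defined steps. Concretely, $B$ consists of the monomials $e_\lambda\cdot f$ whose reduced paths never use an $x$-step from $U_x$ or an $x^{-1}$-step from $V_x$. With $B$ so arranged, each $b\cdot x$ for $b\in B$ of positive length is again in $B$ or in $B\cdot x$ with zero Schreier generator. Only $b\in M$ contributes, and there the nonzero generators correspond to a basis of $U_x$, giving $m_x$ of them. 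If this normal-form bookkeeping proves too messy, the fallback is an alternative route. Show $D$ is free on the given $\sum_x m_x$ generators by comparing $\mathbb F_q$-dimensions of truncations, or via a Mayer--Vietoris argument on the HNN-like construction built one letter $x$ at a time. Each step adds $m_x$ relations, and independence comes from the injectivity already proven.
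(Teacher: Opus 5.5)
\textbf{Verdict: same route as the paper, but the key count is missing.} Your uniqueness argument is fine. Your injectivity argument is the paper's: extend each $\phi_x$ to a linear automorphism and invoke the universal property. The enlargement to $M\oplus W$ is a genuine improvement, since for infinite-dimensional $M$ the complements of $U_x$ and $V_x$ can have different dimensions, so the paper's extension inside $M$ need not exist. For the Euler characteristic you also take the paper's route: Theorem~\ref{schreier} applied to a Schreier basis containing $\{e_\lambda\}$. However, the one step that carries all the content, that exactly $\sum_x m_x(\mathcal M)$ Schreier generators are nonzero, is left as something you ``expect to show''. The plan you offer for it is not a proof:
\begin{itemize}
\item Describing $B$ as the monomials whose paths avoid the partially defined steps presupposes a normal form for $E/D$, namely that these monomials are $K$-linearly independent modulo $D$. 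That is a real statement: one would have to build, van der Waerden style, an $R$-module with $K$-basis $B$ and check that $x$ and $x^{-1}$ act by mutually inverse maps. You assume it.
\item The obstruction you raise is illusory. You never need a basis of $M$ containing bases of all the $U_x$. You only need, for each $x$, a subset of the fixed basis $\{e_\lambda\}$ spanning a complement of $U_x$, and another spanning a complement of $V_x$; these exist by the exchange lemma.
\item The two fallbacks are too vague to assess.
\end{itemize}

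\textbf{How to close the gap.} It closes most easily without choosing $B$ at all. Take any Schreier basis $B\supseteq\{e_\lambda\}$ given by Lewin's lemma, let $S$ be the resulting free basis of $D$, and put $m=m(\mathcal M)$.

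Upper bound: $D$ is generated by the $\sum_x m_x$ elements $u\cdot x-\phi_x(u)$, with $u$ running over bases of the $U_x$. So $D\otimes_R K$ (via the augmentation) has dimension at most $\sum_x m_x$. It is also isomorphic to $K^{(S)}$, hence $|S|\le\sum_x m_x$.

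Lower bound: fix $x$ and put $\Lambda''=\{\lambda\mid e_\lambda\cdot x\in B\}$. Since $B$ is independent modulo $D$, $m+|\Lambda''|\le\dim_K\bigl((M+M\cdot x+D)/D\bigr)$. Both $(M+D)/D$ and $(M\cdot x+D)/D$ have dimension $m$. Their intersection contains $(U_x\cdot x+D)/D=(V_x+D)/D$, which has dimension $m_x$ because $M\cap D=0$. So the right-hand side is at most $2m-m_x$, that is, $|\Lambda''|\le m-m_x$. Each of the at least $m_x$ remaining indices $\lambda$ gives a nonzero generator $e_\lambda\cdot x-\pi_B(e_\lambda\cdot x)$, since a monomial outside $B$ does not lie in $\langle B\rangle_K$. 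These generators are pairwise distinct over all pairs $(\lambda,x)$.

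Hence $\operatorname{rk}(D)=|S|=\sum_x m_x(\mathcal M)$, which is the count you needed. It also shows a posteriori that no $b$ of positive length contributes, as you predicted.
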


\begin{proof}
Uniqueness follows immediately from the universal property.

To prove that $\alpha_{\mathcal M}$ is injective, extend each
isomorphism
\[
\phi_x\colon U_x\to V_x
\]
to a $K$-linear automorphism of $M$. Since $F$ is freely generated by
$X$, these automorphisms define a right $R$-module structure on the
$K$-vector space $M$ that extends all the partial multiplications
$\phi_x$. The identity map from $\mathcal M$ to this $R$-module
extends, by the universal property, to an $R$-homomorphism from
$U(\mathcal M)$. It follows that $\alpha_{\mathcal M}$ is injective.

Now assume that $\mathcal M$ is finite-dimensional over $K$. Recall
that $U(\mathcal M)=E/D$. By the construction of $D$ and
Theorem~\ref{schreier},
\[
\operatorname{rk}(D)
=
\sum_{x\in X}m_x(\mathcal M).
\]
Since $\operatorname{rk}(E)=m(\mathcal M)$, we obtain
\[
\chi\bigl(U(\mathcal M)\bigr)
=
m(\mathcal M)-\sum_{x\in X}m_x(\mathcal M).
\]
\end{proof}

In view of Proposition~\ref{universalenveloping}, for every
finite-dimensional partial $R$-module $\mathcal M$ we define
\[
\chi(\mathcal M)
:=
\chi\bigl(U(\mathcal M)\bigr)
=
m(\mathcal M)-\sum_{x\in X}m_x(\mathcal M).
\]
We shall not distinguish between the base space $M$ and its image
$\alpha_{\mathcal M}(M)$ in $U(\mathcal M)$.

\begin{pro}\label{presentationpartial}
For every finitely presented right $R$-module $L$, there exists a
finite-dimensional partial $R$-module $\mathcal M$ such that
\[
L\cong U(\mathcal M).
\]
Moreover,
\[
\chi(L)
=
m(\mathcal M)-\sum_{x\in X}m_x(\mathcal M).
\]
Given a finite-dimensional $K$-subspace $V$ of $L$, one may choose
$\mathcal M$, with base space $M$, together with an isomorphism
\[
\tau\colon L\to U(\mathcal M)
\]
such that
\[
\tau(V)\subseteq M.
\]
\end{pro}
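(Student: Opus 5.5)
Start from a finite presentation $L\cong E/D$, where $E$ is free on $e_1,\dots,e_r$ and $D$ is generated by finitely many elements $u_1,\dots,u_s$. The point is to build a finite-dimensional partial module whose enveloping module is exactly $E/D$. Take $B_0$ to be the set of all monomials $e_\lambda\cdot f$ with length at most some large $\ell$. Here $\ell$ exceeds the lengths of all $u_j$ and of all elements of $E$ representing a chosen finite $K$-basis of $V$. Let $M_0=\langle B_0\rangle_K\subseteq E$, a finite-dimensional subspace closed under prefixes. On $M_0$ we have the partial action $\phi_x\colon U_x\to V_x$, where $U_x$ is spanned by the monomials $m\in B_0$ with $m\cdot x\in B_0$, and $\phi_x(m)=m\cdot x$. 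It is injective, since right multiplication by $x$ is injective on $E$.

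\textbf{Step 1.} Show $U(\mathcal M_0)\cong E$. Every monomial is reached from some $e_\lambda$ by successive multiplications by letters, and the prefixes stay in $B_0$. So the map $\mathcal M_0\to E$ induces a surjection $U(\mathcal M_0)\to E$. The $e_\lambda$ generate $U(\mathcal M_0)$, and by the universal property $E$ maps back, so the map is an isomorphism.

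\textbf{Step 2.} Pass to the quotient $\mathcal M=\mathcal M_0/\mathcal N$, where $N=D\cap M_0$, or more carefully the partial submodule of $\mathcal M_0$ generated by $u_1,\dots,u_s$. I would take $N=D\cap M_0$. This is a partial submodule: since $D$ is an $R$-submodule, $\phi_x$ maps $N\cap U_x$ into $N\cap V_x$, and $\phi_x^{-1}$ maps $N\cap V_x$ into $N\cap U_x$. By the universal properties, $U(\mathcal M_0/\mathcal N)$ is $U(\mathcal M_0)$ modulo the $R$-submodule generated by $N$. Because $N\subseteq D$ contains all the $u_j$, that submodule is $D$. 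Hence $U(\mathcal M)\cong E/D=L$. The image of $V$ lies in $M=M_0/N$ by the choice of $\ell$, which gives $\tau$.

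\textbf{Step 3.} Read off the Euler characteristic. By Proposition~\ref{universalenveloping}, $\chi(U(\mathcal M))=m(\mathcal M)-\sum_x m_x(\mathcal M)$. It remains to know that $\chi(L)$ is well defined, which is Cohn's result quoted in the introduction. We also need $\chi(U(\mathcal M))$ to be computed from some free presentation. Proposition~\ref{universalenveloping} already provides this, with $E$ of rank $m(\mathcal M)$ and $D$ free of rank $\sum_x m_x$.

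The main obstacle is the quotient step. In general $U(\mathcal M_0/\mathcal N)$ is $U(\mathcal M_0)/\langle N\rangle_R$ by a formal universal-property argument. One must check that the quotient partial maps $\overline\phi_x$ are well-defined isomorphisms. They are well defined because $\phi_x(U_x\cap N)=V_x\cap N$. Injectivity on $(U_x+N)/N$ also needs exactly $\phi_x(U_x\cap N)=V_x\cap N$, and this is why $N$ must be a partial submodule rather than an arbitrary subspace. One must also check that homomorphisms from $\mathcal M_0/\mathcal N$ to an $R$-module correspond to homomorphisms from $\mathcal M_0$ that vanish on $N$. This is routine but should be written out.
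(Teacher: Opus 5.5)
Your proof is correct, but it takes a different route from the paper.

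The paper fixes a Schreier basis $B$ for $E$ modulo $D$ and uses Lewin's theorem (Theorem~\ref{schreier}). Since $D$ is finitely generated, only finitely many $b\in B$ give nonzero Schreier generators $b\cdot x-\pi_B(b\cdot x)$. The paper then takes $M$ to be the span of a finite partial Schreier basis $B_1$ containing these $b$, the $e_\lambda$, and the supports of the $\pi_B(b\cdot x)$, with $\phi_x(m)=\pi_B(m\cdot x)$. Because $B_1$ is linearly independent modulo $D$, $M$ sits inside $L$ from the start and no quotient is needed. The isomorphism $U(\mathcal M)\cong E/D$ then holds because every free generator of $D$ supplied by Lewin's theorem is already a defining relation of $\mathcal M$.

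You instead truncate the free module to a ball. You check directly that $U(\mathcal M_0)\cong E$. Two small points there: the map back comes from freeness of $E$, not from the universal property of $U$. Inverse letters are handled by $\phi_x^{-1}$: if $f=f'x^{-1}$, then $e_\lambda f\cdot x=e_\lambda f'$ lies in the ball. You then impose the relations by quotienting by the partial submodule $D\cap M_0$, using right exactness of $U$. That is exactly Lemma~\ref{submod}, whose proof does not rely on the present proposition, so there is no circularity.

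What your route buys: it needs only a finite generating set of $D$ and avoids Schreier transversals entirely in the construction. Lewin's theorem enters only through Proposition~\ref{universalenveloping} for the Euler characteristic, just as in the paper.

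What the paper's route buys: the base space is spanned by monomials of a transversal, chosen precisely to contain the monomials entering Lewin's free generators of $D$. As a result, the injectivity of $M\to L$ and of the partial maps is built in, rather than checked through the partial-submodule property of $D\cap M_0$ and the quotient construction.
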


\begin{proof}
Write
\[
L=E/D,
\]
where $E$ is a free right $R$-module with finite free basis
$\{e_\lambda\}_{\lambda\in\Lambda}$ and $D$ is a finitely generated
submodule of $E$. Without loss of generality, we may assume that the
elements $\{e_\lambda\}_{\lambda\in\Lambda}$ are $K$-linearly
independent modulo $D$. Let $B$ be a Schreier basis for $E$ modulo
$D$ containing $\{e_\lambda\}_{\lambda\in\Lambda}$.

By Theorem~\ref{schreier}, the set
\[
B_0
=
\left\{
b\in B\ \middle|\ 
\text{there exists }x\in X\text{ such that }
b\cdot x-\pi_B(b\cdot x)\neq 0
\right\}
\]
is finite. Hence, there exists a finite partial Schreier basis
$B_1\subseteq B$ containing
\[
\{e_\lambda\}_{\lambda\in\Lambda}\cup B_0
\]
such that, if $M$ denotes the $K$-linear span of $B_1$, then
\[
\pi_B(b\cdot x)\in M
\qquad\text{for every }b\in B_0\text{ and }x\in X.
\]
If a finite-dimensional subspace $V\leq L$ has been specified, we may
also choose $B_1$ large enough that $M$ contains representatives of a
$K$-basis of $V$.

For each $x\in X$, set
\[
U_x
=
\{m\in M\mid \pi_B(m\cdot x)\in M\}
\]
and define
\[
\phi_x\colon U_x\to M,
\qquad
\phi_x(m)=\pi_B(m\cdot x).
\]
The map $\phi_x$ is $K$-linear. If $m\in\ker\phi_x$, then
$m\cdot x\in D$, and hence $m\in D$. Since $M$ is spanned by elements
of $B$ and $B$ is linearly independent modulo $D$, it follows that
$m=0$. Thus, $\phi_x$ is injective. Put
\[
V_x=\operatorname{im}\phi_x
\]
and
\[
\mathcal M
=
\bigl(M,(\phi_x\colon U_x\to V_x)_{x\in X}\bigr).
\]

By the universal property of
\[
\alpha_{\mathcal M}\colon\mathcal M\to U(\mathcal M),
\]
the natural map $M\to L$ extends to an $R$-homomorphism
\[
\varphi\colon U(\mathcal M)\to L.
\]
By the Schreier property of $B_1$, the partial module $\mathcal M$ is
generated by $\{e_\lambda\}_{\lambda\in\Lambda}$. Consequently,
$U(\mathcal M)$ is generated by these elements as an $R$-module.

By the construction of $M$ and Theorem~\ref{schreier}, the
$R$-homomorphism
\[
E\to U(\mathcal M),
\qquad
e_\lambda\mapsto e_\lambda,
\]
vanishes on $D$. It therefore induces an $R$-homomorphism
\[
\psi\colon E/D\to U(\mathcal M).
\]
It is immediate that $\psi$ and $\varphi$ are mutually inverse.
Hence,
\[
L\cong U(\mathcal M).
\]

The formula for the Euler characteristic follows from
Proposition~\ref{universalenveloping}. The final assertion follows by
choosing $B_1$ so that the representatives of the prescribed
subspace $V$ lie in $M$.
\end{proof}

The following consequence illustrates the usefulness of
Proposition~\ref{presentationpartial} and generalizes
\cite[Theorem~4.3]{RR94}.

\begin{cor}\label{residually-finite-dimensional}
Let $L$ be a finitely presented right $R$-module, and let $V$ be a
finite-dimensional $K$-subspace of $L$. Then there exists a
finite-dimensional quotient $\overline L$ of $L$ such that the
restriction of the quotient map to $V$ is injective. In particular,
$L$ is residually finite-dimensional.
\end{cor}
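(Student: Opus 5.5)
We apply Proposition~\ref{presentationpartial} to $L$ and the prescribed subspace $V$. This gives a finite-dimensional partial $R$-module
\[
\mathcal M=\bigl(M,(\phi_x\colon U_x\to V_x)_{x\in X}\bigr)
\]
together with an isomorphism $\tau\colon L\to U(\mathcal M)$ satisfying $\tau(V)\subseteq M$. We then repeat the argument from the proof of Proposition~\ref{universalenveloping}. For each $x\in X$, the isomorphism $\phi_x\colon U_x\to V_x$ is between subspaces of the same dimension of the finite-dimensional space $M$, so we extend it to a $K$-linear automorphism $\widetilde\phi_x$ of $M$. Since $F$ is freely generated by $X$, the assignment $x\mapsto\widetilde\phi_x$ defines a right $R$-module structure on $M$, which we denote by $\overline M$. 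We set $\overline L:=\overline M$; it is finite-dimensional over $K$ because $\dim_K\overline M=m(\mathcal M)$.

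By construction, the identity map $M\to\overline M$ is a homomorphism of partial $R$-modules $\mathcal M\to\overline M$. The universal property of $\alpha_{\mathcal M}$ therefore gives an $R$-homomorphism $\gamma\colon U(\mathcal M)\to\overline M$ with $\gamma\circ\alpha_{\mathcal M}=\mathrm{id}_M$. The image of $\gamma$ contains $M=\overline M$, so $\gamma$ is surjective. Hence $\gamma\circ\tau\colon L\to\overline M$ is a surjection onto a finite-dimensional module, and $\overline M$ is a finite-dimensional quotient of $L$. On $V$ we have $\tau(V)\subseteq M$, where we identify $M$ with $\alpha_{\mathcal M}(M)$, and $\gamma$ restricts to the identity on $M$. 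Since $\tau$ is injective, $\gamma\circ\tau$ is injective on $V$.

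For the residual statement, let $0\neq v\in L$ and apply the first part with $V=Kv$. The resulting finite-dimensional quotient does not kill $v$, so $L$ is residually finite-dimensional.

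No step here is a serious obstacle; the work was already done in Proposition~\ref{presentationpartial}. The one point to check is that the extension $\widetilde\phi_x$ really exists. This needs $\dim U_x=\dim V_x$, which holds because $\phi_x$ is an isomorphism, and it needs $M$ to be finite-dimensional. Finite-dimensionality of $M$ is exactly what ensures that the complements of $U_x$ and $V_x$ in $M$ have equal dimension, so that $\phi_x$ can be completed to an automorphism.
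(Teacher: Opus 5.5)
Your proposal is correct and follows the paper's proof essentially verbatim: realize $L$ as $U(\mathcal M)$ with $\tau(V)\subseteq M$, extend each $\phi_x$ to an automorphism of $M$, and use the induced surjection $U(\mathcal M)\to\overline L$, which restricts to the identity on $M$. You also spell out the surjectivity and the deduction of residual finite-dimensionality, which the paper leaves implicit.
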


\begin{proof}
Choose
\[
\mathcal M
=
\bigl(M,(\phi_x\colon U_x\to V_x)_{x\in X}\bigr)
\]
as in Proposition~\ref{presentationpartial}, together with an
isomorphism
\[
\tau\colon L\to U(\mathcal M)
\]
such that $\tau(V)\subseteq M$.

Extend each partial isomorphism $\phi_x$ to a $K$-linear automorphism
of $M$. These extensions equip the $K$-vector space $M$ with the
structure of a right $R$-module, which we denote by $\overline L$.
The identity map on $M$ induces a surjective $R$-homomorphism
\[
\gamma\colon U(\mathcal M)\to\overline L.
\]
The restriction of $\gamma\circ\tau$ to $V$ is injective.
\end{proof}

\begin{rem}
We shall prove in Proposition~\ref{freealgebraic} that the kernel of
the map
\[
\gamma\circ\tau\colon L\to\overline L
\]
constructed above is free.
\end{rem}

Let
\[
\mathcal M
=
\bigl(M,(\phi_x\colon U_x\to V_x)_{x\in X}\bigr)
\]
be a partial $R$-module, and let $N$ be a $K$-subspace of $M$. We
define
\[
\mathcal M^N
=
\bigl(N,(\phi_x\colon U_x^N\to V_x^N)_{x\in X}\bigr),
\]
where
\[
U_x^N
=
\{u\in N\cap U_x\mid \phi_x(u)\in N\}
\qquad\text{and}\qquad
V_x^N=\phi_x(U_x^N).
\]
The inclusion map
\[
\mathcal M^N\to\mathcal M
\]
is a homomorphism of partial $R$-modules. Notice that
$\mathcal M^N$ need not be a partial submodule of $\mathcal M$.

Let
\[
\gamma\colon\mathcal M\to\mathcal N
\]
be a homomorphism of partial $R$-modules. It induces a unique
$R$-homomorphism
\[
U(\gamma)\colon U(\mathcal M)\to U(\mathcal N)
\]
such that
\[
U(\gamma)\circ\alpha_{\mathcal M}
=
\alpha_{\mathcal N}\circ\gamma.
\]
We now give a criterion for $U(\gamma)$ to be an isomorphism. We state
it for finite-dimensional partial modules, although it can be
extended to the general setting.

\begin{pro}\label{isomorphism}
Let
\[
\mathcal M
=
\bigl(M,(\phi_x\colon U_x\to V_x)_{x\in X}\bigr)
\quad\text{and}\quad
\mathcal N
=
\bigl(N,(\psi_x\colon W_x\to Z_x)_{x\in X}\bigr)
\]
be finite-dimensional partial $R$-modules, and let
\[
\gamma\colon\mathcal M\to\mathcal N
\]
be a homomorphism. Then $U(\gamma)$ is an isomorphism if and only if
the following two conditions hold:
\begin{enumerate}
\item[(a)]
\[
\ker\gamma=\{0\}
\qquad\text{and}\qquad
\chi(\mathcal N)=\chi(\mathcal M).
\]

\item[(b)] There exists a sequence of $K$-subspaces
\[
\gamma(M)=K_0\subsetneq K_1\subsetneq\cdots\subsetneq K_s=N
\]
such that, for every $i=1,\ldots,s$,
\begin{itemize}
\item
\[
\dim_K(K_i/K_{i-1})=1;
\]

\item
\[
\chi(\mathcal N^{K_i})
=
\chi(\mathcal N^{K_{i-1}});
\]

\item there exists $x\in X$ for which one of the following holds:
\begin{enumerate}
\item[(i)] there exists $v\in K_{i-1}\cap W_x$ such that
\[
u=\psi_x(v)\in K_i\setminus K_{i-1};
\]

\item[(ii)] there exists $v\in K_{i-1}\cap Z_x$ such that
\[
u=\psi_x^{-1}(v)\in K_i\setminus K_{i-1}.
\]
\end{enumerate}
\end{itemize}
\end{enumerate}
\end{pro}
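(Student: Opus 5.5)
We prove the two implications separately. In both directions the main tool is the auxiliary partial modules $\mathcal N^{K}$ attached to subspaces $K\le N$, together with the following monotonicity.

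If $K\subseteq K'$ are subspaces of $N$, then $U_x^{K}\subseteq U_x^{K'}$ for every $x$. Hence if $\dim K'/K=1$ and one of (i), (ii) holds, then
\[
\chi(\mathcal N^{K'})\le \chi(\mathcal N^{K}),
\]
because the dimension grows by $1$ and at least one new pair is added: $v\in U_x^{K'}\setminus U_x^{K}$ in case (i), and $u$ in case (ii). Similarly, since $\gamma$ is a homomorphism, an injective $\gamma$ gives $\gamma(U_x)\subseteq U_x^{\gamma(M)}$ for every $x$, and therefore
\[
\chi(\mathcal N^{\gamma(M)})\le\chi(\mathcal M).
\]

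\emph{Sufficiency.} Assume (a) and (b). Since $\chi$ is constant along the chain, we get $\chi(\mathcal N^{K_0})=\chi(\mathcal N)=\chi(\mathcal M)$.

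First we identify the bottom of the chain with $\mathcal M$. Because $\gamma$ is injective, $\gamma(U_x)\subseteq U_x^{K_0}$, and the equality of Euler characteristics forces $\dim\gamma(U_x)=\dim U_x^{K_0}$ for every $x$. So $\gamma$ is an isomorphism of partial modules $\mathcal M\cong\mathcal N^{K_0}$.

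Next we show each step of the chain induces an isomorphism of enveloping modules. Passing from $K_{i-1}$ to $K_i$ adds one dimension, and $\chi$ is unchanged, so $\sum_y\dim U_y^{K}$ rises by exactly one. Since $U_y^{K_{i-1}}\subseteq U_y^{K_i}$, the new pair is exactly $v\mapsto u$ in case (i), or $u\mapsto v$ in case (ii), for the given $x$. Using the presentation \eqref{constrenvelop}, $U(\mathcal N^{K_i})$ is therefore obtained from $U(\mathcal N^{K_{i-1}})$ by adding one generator $u$ and one relation $u=v\cdot x$ (respectively $u=v\cdot x^{-1}$). By a Tietze move, $U(\mathcal N^{K_{i-1}})\to U(\mathcal N^{K_i})$ is an isomorphism.

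Finally, $\mathcal N^{K_s}=\mathcal N$. Composing all these isomorphisms gives exactly $U(\gamma)$, so $U(\gamma)$ is an isomorphism.

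\emph{Necessity.} Assume $U(\gamma)$ is an isomorphism. Then $\ker\gamma=0$, since $\alpha_{\mathcal N}\circ\gamma=U(\gamma)\circ\alpha_{\mathcal M}$ and $\alpha_{\mathcal M}$ is injective by Proposition~\ref{universalenveloping}. Also $\chi(\mathcal N)=\chi(\mathcal M)$, because the Euler characteristic is an isomorphism invariant \cite{Co64}.

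Now build a chain greedily: starting from $K_0=\gamma(M)$, repeatedly adjoin an element $u$ as in (i) or (ii) while one exists. Let $K$ be the final subspace. If $K=N$, the chain ends at $N$, and the monotonicity above gives
\[
\chi(\mathcal M)\ge\chi(\mathcal N^{K_0})\ge\cdots\ge\chi(\mathcal N^{K_s})=\chi(\mathcal N)=\chi(\mathcal M).
\]
So all these quantities are equal and (b) holds. Thus everything reduces to showing $K=N$. This is the main obstacle.

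By construction $K$ is \emph{closed}: $\psi_x(K\cap W_x)\subseteq K$ and $\psi_x^{-1}(K\cap Z_x)\subseteq K$ for every $x$. We use closure to build an $R$-module structure on $N$ in which $K$ is a submodule. First extend $\psi_x|_{K\cap W_x}$ to an automorphism of $K$. Then define a map on $W_x+K$ by $w+k\mapsto\psi_x(w)+k'$, where $k'$ is the image of $k$ under that automorphism. This is well defined because the two pieces agree on $W_x\cap K$. It is injective: if $\psi_x(w)\in K$, closure gives $w\in K$. Extend this map to a $K$-linear automorphism of $N$. Since $F$ is free on $X$, these automorphisms make $N$ into an $R$-module $P$ extending every $\psi_x$, with $K$ an $R$-submodule.

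The identity map on $N$ induces a surjection $U(\mathcal N)\to P$. On the other hand, $U(\mathcal N)=U(\gamma)(U(\mathcal M))$ is generated as an $R$-module by $\gamma(M)\subseteq K$. Its image in $P$ is therefore contained in the submodule generated by $K$, which is $K$ itself. Surjectivity of $U(\mathcal N)\to P$ then forces $N=K$, which completes the proof.
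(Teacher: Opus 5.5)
Your proof is correct, and most of it follows the paper. The sufficiency half is the paper's argument in a different order: you first read off $\chi(\mathcal N^{K_0})=\chi(\mathcal N)=\chi(\mathcal M)$ directly from the chain and identify $\mathcal M\cong\mathcal N^{K_0}$, and only then do the Tietze steps. Your use of $U_y^{K_{i-1}}\subseteq U_y^{K_i}$ also makes explicit why constancy of $\chi$ means exactly one new relation $u=v\cdot x^{\pm1}$ is added.

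The necessity half uses the same greedy chain, but both key steps are argued differently.
\begin{itemize}
\item \textbf{Constancy of $\chi$.} The paper keeps track inductively that $U(\mathcal N^{K_t})\to U(\mathcal N)$ is an isomorphism. Each step $U(\mathcal N^{K_t})\to U(\mathcal N^{K_{t+1}})$ is then injective because its composite into $U(\mathcal N)$ is, and surjective because $u=v\cdot x^{\pm1}$. You instead squeeze everything in $\chi(\mathcal M)\ge\chi(\mathcal N^{K_0})\ge\cdots\ge\chi(\mathcal N)=\chi(\mathcal M)$, using only the monotonicity count.
\item \textbf{Termination at $N$.} Once $\mathcal N^{K_t}$ is a partial submodule, the paper applies Lemma~\ref{submod} to get $U(\mathcal N/\mathcal N^{K_t})=0$, then uses injectivity of the enveloping map of the quotient. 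You build an $R$-module structure on $N$ in which the closed subspace is invariant. This is in effect the extension trick from Proposition~\ref{universalenveloping}, applied to $\mathcal N/\mathcal N^{K}$ and inlined.
\end{itemize}

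What each route buys: yours is self-contained and avoids Lemma~\ref{submod}. It uses only injectivity of $\gamma$, the equality $\chi(\mathcal M)=\chi(\mathcal N)$, and surjectivity of $U(\gamma)$. As a small bonus, it shows that under (a), $U(\gamma)$ is an isomorphism as soon as it is surjective. The paper's route reuses machinery it needs elsewhere, and records along the way that every $U(\mathcal N^{K_i})\to U(\mathcal N)$ is an isomorphism.

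One cosmetic point: you use $K$ both for the ground field and for the closed subspace.
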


Before proving the proposition, we consider the case in which the
image of $\gamma$ is a partial submodule.

\begin{lem}\label{submod}
Let $\mathcal M$ be a partial submodule of $\mathcal N$. Denote by
\[
\gamma\colon\mathcal M\to\mathcal N
\]
the inclusion map and by
\[
\pi\colon\mathcal N\to\mathcal N/\mathcal M
\]
the quotient map. Then the sequence of right $R$-modules
\[
U(\mathcal M)
\xrightarrow{U(\gamma)}
U(\mathcal N)
\xrightarrow{U(\pi)}
U(\mathcal N/\mathcal M)
\longrightarrow 0
\]
is exact.
\end{lem}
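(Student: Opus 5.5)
Since every module here is given by the explicit presentation \eqref{constrenvelop}, I would prove exactness by comparing presentations directly. Choose a $K$-basis of $N$ that extends a $K$-basis of $M$. Let $E_N$ be the free right $R$-module on this basis, and let $E_M\le E_N$ be the free submodule on the basis elements lying in $M$. Then
\[
U(\mathcal N)=E_N/D_N,\qquad D_N=\bigl\langle n\cdot x-\psi_x(n)\mid x\in X,\ n\in W_x\bigr\rangle_R,
\]
and similarly $U(\mathcal M)=E_M/D_M$, where $D_M$ is generated by the elements $m\cdot x-\psi_x(m)$ with $m\in M\cap W_x$. This uses that $\mathcal M$ is a partial submodule, so its partial maps are the restrictions of the $\psi_x$ to $M\cap W_x$. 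The map $U(\gamma)$ is induced by the inclusion $E_M\subseteq E_N$. Its image is therefore $(E_M+D_N)/D_N$, which is the $R$-submodule of $U(\mathcal N)$ generated by $M$.

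Surjectivity of $U(\pi)$ is immediate. $U(\mathcal N/\mathcal M)$ is generated as an $R$-module by the image of its base space $N/M$, and every element of $N/M$ is the image of an element of $N$.

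It also follows formally that $U(\pi)\circ U(\gamma)=0$. The composite is $U(\pi\circ\gamma)$, and $\pi\circ\gamma$ is the zero homomorphism $\mathcal M\to\mathcal N/\mathcal M$. By uniqueness in the universal property, the induced map must be $0$.

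The main step is the inclusion $\ker U(\pi)\subseteq \operatorname{im}U(\gamma)$. I would prove it by showing that $Q:=U(\mathcal N)/\operatorname{im}U(\gamma)$ satisfies the universal property of $U(\mathcal N/\mathcal M)$, or more concretely, by building an inverse map $U(\mathcal N/\mathcal M)\to Q$. Define a $K$-linear map $\bar\beta\colon N/M\to Q$ by $n+M\mapsto \alpha_{\mathcal N}(n)+\operatorname{im}U(\gamma)$. This is well defined because $\alpha_{\mathcal N}(M)\subseteq \operatorname{im}U(\gamma)$.

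Next I would check that $\bar\beta$ is a homomorphism of partial modules. Take $\bar u\in\overline W_x=(W_x+M)/M$. We may choose the lift $u\in W_x$. Then $\overline\psi_x(\bar u)=\psi_x(u)+M$, and in $U(\mathcal N)$ we have $\alpha_{\mathcal N}(u)\cdot x=\alpha_{\mathcal N}(\psi_x(u))$. Reducing modulo $\operatorname{im}U(\gamma)$ gives $\bar\beta(\bar u)\cdot x=\bar\beta(\overline\psi_x(\bar u))$. The point to watch is that $\overline\psi_x$ is well defined; this holds precisely because $\mathcal M$ is a partial submodule, so $\psi_x$ maps $M\cap W_x$ onto $M\cap Z_x$. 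Given that, the relation above is independent of the chosen lift.

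By the universal property, $\bar\beta$ extends to an $R$-homomorphism $U(\mathcal N/\mathcal M)\to Q$. Composed with $U(\pi)$ (which factors through $Q$), it gives the quotient map $U(\mathcal N)\to Q$. Indeed, both are $R$-homomorphisms that agree on the generating set $\alpha_{\mathcal N}(N)$. Hence $\ker U(\pi)$ is contained in the kernel of $U(\mathcal N)\to Q$, which is $\operatorname{im}U(\gamma)$. This completes exactness at $U(\mathcal N)$.
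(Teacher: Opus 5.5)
Your proof is correct and is essentially the paper's argument. The paper likewise identifies $\operatorname{im}U(\gamma)$ with the $R$-submodule of $U(\mathcal N)$ generated by $M$ and asserts that $\ker U(\pi)$ equals it, reading this off directly from the presentation \eqref{constrenvelop}; you supply the verification the paper leaves implicit by applying the universal property to $U(\mathcal N)/\operatorname{im}U(\gamma)$, and you correctly locate the one place where the partial-submodule hypothesis is needed, namely the well-definedness of $\overline\psi_x$.
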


\begin{rem}\label{equality}
We shall prove below that the sequence is also exact at
$U(\mathcal M)$; that is,
\[
0\longrightarrow U(\mathcal M)
\xrightarrow{U(\gamma)}
U(\mathcal N)
\]
is exact.
\end{rem}

\begin{proof}
The assertion follows directly from the construction of universal
enveloping modules in \eqref{constrenvelop}. The map $U(\pi)$ is
surjective because the image of $N$ generates
$U(\mathcal N/\mathcal M)$. Moreover, $\ker U(\pi)$ is the submodule
of $U(\mathcal N)$ generated by $M$, and therefore
\[
\ker U(\pi)=\operatorname{im}U(\gamma).
\]
\end{proof}

We are now ready to prove Proposition~\ref{isomorphism}.

\begin{proof}[Proof of Proposition~\ref{isomorphism}]
First assume that conditions \textup{(a)} and \textup{(b)} hold. By
the construction in \eqref{constrenvelop}, the passage from
$U(\mathcal N^{K_{i-1}})$ to $U(\mathcal N^{K_i})$ amounts to adding
one generator $u$ and a relation of the form
\[
u=v\cdot x^{\pm1},
\qquad x\in X,\quad v\in K_{i-1}.
\]
Since
\[
\chi(\mathcal N^{K_i})
=
\chi(\mathcal N^{K_{i-1}}),
\]
there are no additional independent relations. Consequently, for
each $i=1,\ldots,s$, the inclusion
\[
\mathcal N^{K_{i-1}}\to\mathcal N^{K_i}
\]
induces an isomorphism
\[
U(\mathcal N^{K_{i-1}})
\cong
U(\mathcal N^{K_i}).
\]
In particular,
\[
U(\mathcal N^{K_0})\cong U(\mathcal N),
\]
and hence
\[
\chi(\mathcal M)
=
\chi(\mathcal N)
=
\chi(\mathcal N^{K_0}).
\]

The map $\gamma$ induces a homomorphism
\[
\gamma'\colon\mathcal M\to\mathcal N^{K_0}.
\]
Since $\gamma$ is injective,
\[
m(\mathcal M)=m(\mathcal N^{K_0}),
\]
and, for every $x\in X$,
\[
m_x(\mathcal M)\leq m_x(\mathcal N^{K_0}).
\]
The equality of the Euler characteristics therefore implies
\[
m_x(\mathcal M)=m_x(\mathcal N^{K_0})
\qquad\text{for every }x\in X.
\]
Thus, $\gamma'$ is an isomorphism of partial $R$-modules, and it
follows that $U(\gamma)$ is an isomorphism.

Conversely, suppose that $U(\gamma)$ is an isomorphism. Since
\[
U(\gamma)\circ\alpha_{\mathcal M}
=
\alpha_{\mathcal N}\circ\gamma
\]
and the universal enveloping maps are injective on their base spaces,
$\gamma$ is injective. Moreover,
\[
\chi(\mathcal M)=\chi(\mathcal N).
\]
Thus, condition \textup{(a)} holds.

Set
\[
K_0=\gamma(M).
\]
Arguing as above, we see that the induced map
\[
\gamma'\colon\mathcal M\to\mathcal N^{K_0}
\]
is an isomorphism. We construct inductively a sequence
\[
\gamma(M)=K_0\subsetneq K_1\subsetneq\cdots\subsetneq K_s=N
\]
satisfying condition \textup{(b)}.

Suppose that $K_0,\ldots,K_t$ have been constructed in such a way
that, for every $1\leq i\leq t$,
\[
\dim_K(K_i/K_{i-1})=1
\]
and the inclusion
\[
\mathcal N^{K_{i-1}}\to\mathcal N^{K_i}
\]
induces an isomorphism of universal enveloping modules. Since
$U(\gamma)$ is an isomorphism, the inclusion
\[
\mathcal N^{K_t}\to\mathcal N
\]
also induces an isomorphism
\[
U(\mathcal N^{K_t})\cong U(\mathcal N).
\]

If $\mathcal N^{K_t}$ is a partial submodule of $\mathcal N$, then
Lemma~\ref{submod} implies that
\[
U(\mathcal N/\mathcal N^{K_t})=0.
\]
The canonical map from the base space of
$\mathcal N/\mathcal N^{K_t}$ to its universal enveloping module is
injective, so $N=K_t$, and the construction is complete.

Assume, therefore, that $\mathcal N^{K_t}$ is not a partial submodule
of $\mathcal N$. Then there exists $x\in X$ such that one of the
following holds:
\[
v\in K_t\cap W_x,
\qquad
u=\psi_x(v)\notin K_t,
\]
or
\[
v\in K_t\cap Z_x,
\qquad
u=\psi_x^{-1}(v)\notin K_t.
\]
Set
\[
K_{t+1}=K_t+Ku.
\]
The induced map
\[
U(\mathcal N^{K_t})
\longrightarrow
U(\mathcal N^{K_{t+1}})
\]
is injective because its composition with the map to
$U(\mathcal N)$ is an isomorphism. It is also surjective, since the
new vector $u$ is obtained from an element of $K_t$ by multiplication
by $x$ or $x^{-1}$. Hence it is an isomorphism, and therefore
\[
\chi(\mathcal N^{K_t})
=
\chi(\mathcal N^{K_{t+1}}).
\]
This completes the induction and proves condition \textup{(b)}.
\end{proof}

We note the following corollary to Lemma \ref{submod}, Remark \ref{equality}, and Proposition \ref{universalenveloping}.
\begin{cor}\label{submodule generated}
	Let $\mathcal{N}$ be a partial module and $\mathcal M$ the partial submodule, and $\iota:\mathcal{M}\hookrightarrow\mathcal{N}$ the inclusion map. Then $\alpha_{\mathcal{N}}^{-1}(\im(U(\iota)))=\M$ .
\end{cor}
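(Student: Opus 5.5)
We must show that $\alpha_{\mathcal N}^{-1}(\operatorname{im}U(\iota))=M$, where $M$ is the base space of $\mathcal M$.

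The inclusion $M\subseteq\alpha_{\mathcal N}^{-1}(\operatorname{im}U(\iota))$ is immediate. For $m\in M$ we have
\[
\alpha_{\mathcal N}(m)=\alpha_{\mathcal N}(\iota(m))=U(\iota)(\alpha_{\mathcal M}(m)),
\]
and the right-hand side lies in the image of $U(\iota)$.

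For the reverse inclusion we use the exact sequence of Lemma~\ref{submod}. Let $\pi\colon\mathcal N\to\mathcal N/\mathcal M$ be the quotient map. The sequence
\[
U(\mathcal M)\xrightarrow{U(\iota)}U(\mathcal N)\xrightarrow{U(\pi)}U(\mathcal N/\mathcal M)\to 0
\]
is exact, so $\operatorname{im}U(\iota)=\ker U(\pi)$. Now take $n\in N$ with $\alpha_{\mathcal N}(n)\in\operatorname{im}U(\iota)$. Then
\[
0=U(\pi)(\alpha_{\mathcal N}(n))=\alpha_{\mathcal N/\mathcal M}(\pi(n)).
\]
By Proposition~\ref{universalenveloping}, the map $\alpha_{\mathcal N/\mathcal M}$ is injective on the base space $N/M$. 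Hence $\pi(n)=0$, that is, $n+M=0$ in $N/M$, and so $n\in M$. This proves $\alpha_{\mathcal N}^{-1}(\operatorname{im}U(\iota))\subseteq M$.

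The only point needing care is the identification $\ker U(\pi)=\operatorname{im}U(\iota)$. In the proof of Lemma~\ref{submod} it is read off from the construction \eqref{constrenvelop}. Take a $K$-basis of $N$ extending a basis of $M$, and let $E$ be the free module on it. Adding to the relations of $U(\mathcal N)$ the generators $m$ for $m\in M$ gives exactly the relations of $U(\mathcal N/\mathcal M)$, since $\overline U_x=(U_x+N)/N$ with the induced maps. This requires only the exactness at $U(\mathcal N)$ that the lemma already states. Injectivity of $U(\iota)$ (Remark~\ref{equality}) is not needed for this argument.
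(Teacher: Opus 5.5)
Your proof is correct and is essentially the paper's argument: exactness of Lemma~\ref{submod} at $U(\mathcal N)$, the commutativity $U(\pi)\circ\alpha_{\mathcal N}=\alpha_{\mathcal N/\mathcal M}\circ\pi$, and injectivity of $\alpha_{\mathcal N/\mathcal M}$ on its base space (Proposition~\ref{universalenveloping}). You are also right that only exactness at $U(\mathcal N)$ is needed; the paper's diagram displays the left-hand $0$ but never uses it. One small notational slip in your last paragraph: with $\mathcal M$ as the submodule, the quotient data should read $(U_x+M)/M$, not $(U_x+N)/N$.
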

\begin{proof}
	We have the following commutative diagram, where $\iota$ denotes the inclusion: 
	\begin{equation*}
		\begin{tikzcd}
		0 \arrow[r] & \mathcal{M} \arrow[r, "\iota"] \arrow[d, "\alpha_{\mathcal{M}}"] & \mathcal{N} \arrow[r, "q"] \arrow[d, "\alpha_{\mathcal{N}}"] & \mathcal{N}/\mathcal{M} \arrow[r] \arrow[d, "\alpha_{\mathcal{N}/\mathcal{M}}"] & 0 \\
		0 \arrow[r] & U(\mathcal{M}) \arrow[r, "U(\iota)"]                             & U(\mathcal{N}) \arrow[r, "U(q)"]                             & U(\mathcal{N}/\mathcal{M}) \arrow[r]                                            & 0
		\end{tikzcd}
	\end{equation*}
	Let $W=\alpha_{\mathcal{N}}^{-1}(\im(U(\iota)))$. Using the exactness of the second row first, and then commutativity of the rightmost square, we get
	\begin{equation*}
		W=\ker(U(q)\circ \alpha_{\mathcal{N}})=\ker(\alpha_{\mathcal{N}/\mathcal{M}}\circ q).
	\end{equation*}
	But $\alpha_{\mathcal{N}/\mathcal{M}}$ is injective, by Proposition \ref{universalenveloping}, so in fact
	\begin{equation*}
		W=\ker(q)=\mathcal{M},
	\end{equation*}   
	as needed. 
\end{proof}

\begin{rem}
  Note that in the particular case in which $\mathcal{M}$ is the partial submodule generated by some subset $S\subseteq\mathcal{N}$, the submodule $\im(U(\iota))$ is simply the submodule of $U(\mathcal{N})$ generated by $\alpha_{\mathcal{N}}(S)$. 
\end{rem}

We say that a partial $R$-module $\mathcal M$ is \textbf{free} if
$U(\mathcal M)$ is a free $R$-module, and that $\mathcal M$ is
\textbf{algebraic} if $U(\mathcal M)$ has no nontrivial free
quotients. We denote by $A(\mathcal M)$ the set of nontrivial
algebraic partial submodules of $\mathcal M$.

The preceding results imply that, when $K$ is finite, a finitely
presented $R$-module has only finitely many algebraic submodules.

\begin{pro}\label{freealgebraic}
Let
\[
\mathcal M
=
\bigl(M,(\phi_x\colon U_x\to V_x)_{x\in X}\bigr)
\]
be a finite-dimensional partial $R$-module, and let $L$ be a finitely
generated $R$-submodule of $U(\mathcal M)$.
\begin{enumerate}
\item[(a)] If
\[
L\cap M=\{0\},
\]
then $L$ is free.

\item[(b)] If $L$ is algebraic, then $L$ is generated as an
$R$-module by $L\cap M$.
\end{enumerate}
In particular, if $K$ is finite, then $A(L)$ is finite for every
finitely presented right $R$-module $L$.
\end{pro}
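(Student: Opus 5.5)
Our plan is to realize $U(\mathcal M)$ as $E/D$ and use a Schreier basis adapted to $M$, so that $L$ sits inside a module with a tree-like structure outside $M$. Concretely, pick a $K$-basis $B_1$ of $M$ consisting of the free generators $e_\lambda$ of $E$ (as in \eqref{constrenvelop}) and extend it to a Schreier basis $B$ of $E$ modulo $D$. Since $D$ is generated by the relations $m\cdot x-\phi_x(m)$ with $m\in U_x$, we can choose $B$ so that every $b\in B\setminus B_1$ is of the form $e_\lambda\cdot f$, where the reduced word $f$ leaves $M$ at its first letter with no relation applying. Thus outside $M$ the $K$-basis of $U(\mathcal M)$ is a disjoint union of ``free branches''. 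Equivalently, we may view $U(\mathcal M)$ as the union of an increasing chain of finite-dimensional partial modules $\mathcal M=\mathcal M_0\subseteq \mathcal M_1\subseteq\cdots$, where $\mathcal M_{k+1}$ adds a new vector $u=v\cdot x^{\pm1}$ for some $v$ in $\mathcal M_k$ on which $x^{\pm1}$ was undefined. By the forward direction of the proof of Proposition~\ref{isomorphism}, each step preserves $U$ and $\chi$, and the union of the chain is $U(\mathcal M)$ as a full module.

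For (a), we take a finite $K$-basis of a generating set of $L$. Its elements lie in some $\mathcal M_k$, and the goal is to show that $L$ is free. For this we would apply Theorem~\ref{schreier} to the submodule $L$ of the free module $E$, taken modulo the image, or more directly use the following leading-term argument. Since $L\cap M=0$, every nonzero element of $L$ has a nonzero component on a free branch. Choose in each nonzero $\ell\in L$ a leading monomial, namely a branch monomial of maximal length. Multiplication by $x^{\pm1}$ acts on branch monomials as on a free action of $F$ on the tree's ends, so the standard Lewin/Cohn weak-algorithm argument gives that $L$ has a $K$-basis behaving like a submodule of a free module. In other words, $L$ embeds in the free module spanned by branch monomials modulo lower terms, and hence $L$ is free by the Cohn--Lewin theorem that submodules of free modules over $K[F]$ are free. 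We expect this step to be the main obstacle: making precise that $U(\mathcal M)/M$-type ``branch part'' behaves like a free module. We would try first to show that the projection $U(\mathcal M)\to U(\mathcal M)/\langle M\rangle_R$ is not needed, and instead that the $K$-linear projection onto the span of $B\setminus B_1$ restricted to $L$ is injective and $F$-equivariant up to lower-order terms. The hope is that this yields a filtration whose associated graded is a submodule of a free module.

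For (b), let $L'$ be the $R$-submodule generated by $L\cap M$. This is $\operatorname{im}U(\iota)$ for the partial submodule $\mathcal L=\langle L\cap M\rangle$, and by Corollary~\ref{submodule generated} we have $\alpha^{-1}(L')\cap M=\mathcal L$, which is contained in $L\cap M$. Pass to $\mathcal M/\mathcal L$. By Lemma~\ref{submod}, $U(\mathcal M)/L'\cong U(\mathcal M/\mathcal L)$, and the image $\bar L$ of $L$ there meets the base space $M/(L\cap M)$ trivially, using that $L\cap M$ is closed under the partial operations. By (a), $\bar L=L/L'$ is free. Since $L$ is algebraic it has no nontrivial free quotient, so $L=L'$.

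Finally, for finiteness, write $L\cong U(\mathcal M)$ by Proposition~\ref{presentationpartial}. Every algebraic $N\le L$ is finitely generated, since it is the minimal submodule with free quotient of a finitely generated module, or by the Cohn rank bounds. By (b), $N$ is determined by the subspace $N\cap M$ of the finite space $M$. When $K$ is finite there are only finitely many such subspaces, so $A(L)$ is finite.
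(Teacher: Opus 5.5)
\textbf{There is a genuine gap: part (a) is not proved.} Your reduction of (b) to (a) is the same as the paper's. You take the partial submodule on $L\cap M$, pass to the quotient partial module, and check that the image of $L$ in $U(\mathcal M/\mathcal L)\cong U(\mathcal M)/L'$ meets the base space trivially. Then you apply (a) and use that an algebraic module has no nonzero free quotient. Your final count of subspaces of $M$ is also the paper's. But part (a) carries essentially all the content, and you leave it as a ``hope''.

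The leading-term sketch does not work as written. The naive length filtration by branch monomials is not compatible with the action. Branch monomials can be multiplied back into $M$ (e.g.\ $e\cdot x\cdot x^{-1}=e$), and there the partial maps $\phi_x$ mix basis vectors. So you have no associated graded module, let alone one embedded in a free module. Even granting one, you would still need a lifting argument to pass from freeness of the graded object to freeness of $L$. Invoking Cohn--Lewin does not help either: showing that $L$ sits inside a free module is essentially what has to be proved.

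\textbf{The missing idea} is to run your chain against a finite partial presentation of $L$ itself, rather than against $U(\mathcal M)$ as a whole.

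By Proposition~\ref{presentationpartial}, write $L\cong U(\mathcal V)$ with $\mathcal V$ finite-dimensional. Applying that proposition again to $U(\mathcal M)$, choose a finite-dimensional $\mathcal N$ and $\gamma\colon\mathcal M\to\mathcal N$ such that $U(\gamma)$ is an isomorphism and the base space $N$ contains the image $V'$ of the base space of $\mathcal V$. Then $\mathcal N^{V'}\cong\mathcal V$.

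Proposition~\ref{isomorphism} now gives a chain $\gamma(M)=K_0\subsetneq\cdots\subsetneq K_s=N$. At each step one vector $u=\psi_x^{\pm1}(v)$ with $v\in K_{i-1}$ is added, together with exactly one new $x$-relation. One side of that relation lies in $K_{i-1}$, and all other domains are unchanged.

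Intersect with $V'$ by setting $L_i=K_i\cap V'$. Then $L_0=0$ because $L\cap M=0$. Since $L_i\cap K_{i-1}=L_{i-1}$, each step does one of three things:
\begin{itemize}
\item nothing changes;
\item the new vector of $L_i$ enters with no relation, which adds a free summand $R$;
\item it enters with a single relation expressing it as an element of $L_{i-1}$ times $x^{\pm1}$, so the envelope is unchanged.
\end{itemize}
In particular, no ``cyclic'' relation such as $w\cdot x=w$ can arise, and that is exactly what could destroy freeness. Induction gives that $L\cong U(\mathcal N^{V'})$ is free.

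\textbf{A smaller point.} Your reason why every algebraic $N\le L$ is finitely generated is incorrect. An arbitrary algebraic $N$ is the minimal submodule with free quotient only of itself. It is that submodule for $L$ only when $N$ is the largest algebraic submodule, which is a direct summand of $L$. The paper invokes (b) at this point without discussing finite generation, but the argument you give does not supply it.
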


\begin{proof}
We first prove \textup{(a)}. Since $L$ is finitely generated, it is
finitely presented. By Proposition~\ref{presentationpartial}, there
exists a finite-dimensional partial $R$-module $\mathcal V$, with
base space $V$, and an isomorphism
\[
\tau\colon L\to U(\mathcal V).
\]

Apply Proposition~\ref{presentationpartial} to $U(\mathcal M)$ and to
a finite-dimensional subspace containing both $M$ and the image in
$U(\mathcal M)$ of $\tau^{-1}(V)$. We obtain a finite-dimensional
partial $R$-module
\[
\mathcal N
=
\bigl(N,(\psi_x\colon W_x\to Z_x)_{x\in X}\bigr)
\]
and a homomorphism
\[
\gamma\colon\mathcal M\to\mathcal N
\]
such that
\[
U(\gamma)\colon U(\mathcal M)\to U(\mathcal N)
\]
is an isomorphism and $N$ contains
\[
V'
=
U(\gamma)\bigl(\tau^{-1}(V)\bigr).
\]
The induced partial module $\mathcal N^{V'}$ is isomorphic to
$\mathcal V$. Moreover,
\[
V'\cap\gamma(M)=\{0\},
\]
because $L\cap M=\{0\}$.

By Proposition~\ref{isomorphism}, there exists a sequence
\[
\gamma(M)=K_0\subsetneq K_1\subsetneq\cdots\subsetneq K_s=N
\]
such that, for every $i=1,\ldots,s$,
\[
\dim_K(K_i/K_{i-1})=1,
\qquad
\chi(\mathcal N^{K_i})
=
\chi(\mathcal N^{K_{i-1}}),
\]
and there exists $x\in X$ for which the new vector in
$K_i\setminus K_{i-1}$ is obtained from an element of $K_{i-1}$ by
applying $\psi_x$ or $\psi_x^{-1}$.

Set
\[
L_i=K_i\cap V'.
\]
We prove by induction on $i$ that $\mathcal N^{L_i}$ is free. Since
\[
L_0=V'\cap\gamma(M)=\{0\},
\]
the assertion holds for $i=0$. Suppose that
$\mathcal N^{L_{i-1}}$ is free. If $L_i=L_{i-1}$, there is nothing to
prove. We may therefore assume that
\[
\dim_K(L_i/L_{i-1})=1.
\]

Let $x\in X$ be the generator associated with the passage from
$K_{i-1}$ to $K_i$. As observed in the proof of
Proposition~\ref{isomorphism},
\[
m_y(\mathcal N^{K_i})
=
m_y(\mathcal N^{K_{i-1}})
\qquad (y\in X,\ y\neq x),
\]
whereas
\[
m_x(\mathcal N^{K_i})
=
m_x(\mathcal N^{K_{i-1}})+1.
\]
It follows that
\[
m_y(\mathcal N^{L_i})
=
m_y(\mathcal N^{L_{i-1}})
\qquad (y\in X,\ y\neq x),
\]
and
\[
m_x(\mathcal N^{L_i})
\leq
m_x(\mathcal N^{L_{i-1}})+1.
\]

If
\[
m_x(\mathcal N^{L_i})
=
m_x(\mathcal N^{L_{i-1}}),
\]
then the new basis vector introduces no new relation, and hence
\[
U(\mathcal N^{L_i})
\cong
U(\mathcal N^{L_{i-1}})\oplus R.
\]
If
\[
m_x(\mathcal N^{L_i})
=
m_x(\mathcal N^{L_{i-1}})+1,
\]
then the new basis vector is accompanied by one relation, and hence
\[
U(\mathcal N^{L_i})
\cong
U(\mathcal N^{L_{i-1}}).
\]
In either case, $U(\mathcal N^{L_i})$ is free. This completes the
induction.

Since $K_s=N$, we have $L_s=V'$, and therefore
$\mathcal N^{V'}$ is free. Thus, $\mathcal V$ is free, and hence $L$
is free.

We now prove \textup{(b)}. Let $L'$ be the $R$-submodule of
$U(\mathcal M)$ generated by $L\cap M$, and put
\[
N=L'\cap M.
\]
Then $\mathcal N=\mathcal M^N$ is a partial submodule of $\mathcal M$.
Let
\[
\pi\colon\mathcal M\to\mathcal M/\mathcal N
\]
be the quotient map. By Lemma~\ref{submod},
\[
\ker U(\pi)
\]
is the submodule of $U(\mathcal M)$ generated by $N$, and hence
\[
\ker U(\pi)=L'.
\]
Furthermore,
\[
U(\pi)(L)\cap(M/N)=\{0\}.
\]
Indeed, if the image of an element of $L$ lies in $M/N$, then,
modulo $L'$, that element is represented by an element of $L\cap M$,
and hence its image is zero.

Part \textup{(a)} now implies that $U(\pi)(L)$ is free. Since it is a
quotient of the algebraic module $L$, it must be trivial. Therefore,
\[
L=L',
\]
as required.

Finally, suppose that $K$ is finite and that $L$ is finitely
presented. By Proposition~\ref{presentationpartial}, write
\[
L\cong U(\mathcal M)
\]
for a finite-dimensional partial module $\mathcal M$ with base space
$M$. By part \textup{(b)}, every algebraic submodule of $L$ is
generated by its intersection with $M$. Since a finite-dimensional
vector space over a finite field has only finitely many subspaces,
$A(L)$ is finite.
\end{proof}

We can now prove Remark~\ref{equality}.

\begin{cor}\label{submod2}
Let $\mathcal M$ be a finite-dimensional partial submodule of a
finite-dimensional partial $R$-module $\mathcal N$. Denote by
\[
\gamma\colon\mathcal M\to\mathcal N
\]
the inclusion map. Then
\[
U(\gamma)\colon U(\mathcal M)\to U(\mathcal N)
\]
is injective.
\end{cor}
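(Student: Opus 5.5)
We plan an Euler characteristic argument: we show that the kernel $K$ of $U(\gamma)$ is a finitely generated free module of rank $0$. Write $L=\operatorname{im}U(\gamma)\leq U(\mathcal N)$. By Lemma~\ref{submod} we have a short exact sequence
\[
0\longrightarrow L\longrightarrow U(\mathcal N)\longrightarrow U(\mathcal N/\mathcal M)\longrightarrow 0 .
\]
We also have $0\to K\to U(\mathcal M)\to L\to 0$. The strategy is to compute $\chi(L)$ in two ways, conclude $\chi(K)=0$, and then use Proposition~\ref{freealgebraic}(a) to show $K$ is free.

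\textbf{Step 1: additivity of $\chi$ on partial modules.} Since $\mathcal M$ is a partial submodule of $\mathcal N$, we have $m(\mathcal N)=m(\mathcal M)+m(\mathcal N/\mathcal M)$. Moreover
\[
m_x(\mathcal N/\mathcal M)=\dim_K\bigl((U_x+M)/M\bigr)=m_x(\mathcal N)-\dim_K(U_x\cap M)=m_x(\mathcal N)-m_x(\mathcal M).
\]
Hence $\chi(\mathcal N)=\chi(\mathcal M)+\chi(\mathcal N/\mathcal M)$.

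\textbf{Step 2: computing $\chi(L)$ from presentations.} Write $U(\mathcal N)=E/D$ with $E,D$ finitely generated free, and let $E'\supseteq D$ be the preimage of $L$. Then $E'$ is generated by $D$ together with lifts of finitely many generators of $L$, so it is finitely generated. As a submodule of a free module it is free, of finite rank. Thus $L=E'/D$ is finitely presented with $\chi(L)=\operatorname{rk}E'-\operatorname{rk}D$. Also $U(\mathcal N/\mathcal M)\cong E/E'$, so $\chi(U(\mathcal N/\mathcal M))=\operatorname{rk}E-\operatorname{rk}E'$. Combining with Step~1 and the well-definedness of $\chi$ (invariant basis number, \cite{Co64}) gives
\[
\chi(L)=\chi(\mathcal N)-\chi(\mathcal N/\mathcal M)=\chi(\mathcal M).
\]

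\textbf{Step 3: $K$ is finitely generated with $\chi(K)=0$.} Write $U(\mathcal M)=P/Q$ with $P,Q$ finitely generated free, and let $Q'$ be the preimage of $K$ in $P$, so that $L\cong P/Q'$. The module $Q'$ is free. I expect the main obstacle to be justifying that $Q'$ has finite rank. I would argue that $K$ is the kernel of a surjection from the finitely presented module $U(\mathcal M)$ onto the finitely presented module $L$, and hence finitely generated by the standard Schanuel-type argument; then $Q'$ is generated by $Q$ and lifts of finitely many generators of $K$. Now $K=Q'/Q$ gives $\chi(K)=\operatorname{rk}Q'-\operatorname{rk}Q$, while $L\cong P/Q'$ gives $\chi(L)=\operatorname{rk}P-\operatorname{rk}Q'$. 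Therefore
\[
\chi(K)=\operatorname{rk}Q'-\operatorname{rk}Q=\chi(U(\mathcal M))-\chi(L)=0
\]
by Step~2.

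\textbf{Step 4: $K$ is free, hence zero.} We show $K\cap M=\{0\}$ inside $U(\mathcal M)$, where we identify $M$ with $\alpha_{\mathcal M}(M)$. We have $U(\gamma)\circ\alpha_{\mathcal M}=\alpha_{\mathcal N}\circ\gamma$. The right-hand side is injective on $M$, because $\gamma$ is an inclusion and $\alpha_{\mathcal N}$ is injective by Proposition~\ref{universalenveloping}. So no nonzero element of $M$ maps to $0$, i.e.\ $K\cap M=\{0\}$. By Proposition~\ref{freealgebraic}(a), applied to the finitely generated submodule $K$ of $U(\mathcal M)$, the module $K$ is free. A finitely generated free module with $\chi(K)=0$ has rank $0$, so $K=0$ and $U(\gamma)$ is injective.
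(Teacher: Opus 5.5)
Your proposal is correct and follows essentially the same route as the paper. Both use the right-exact sequence of Lemma~\ref{submod} and additivity of $\chi$ to get $\chi(\ker U(\gamma))=0$, note that $\ker U(\gamma)\cap M=\{0\}$, and conclude from Proposition~\ref{freealgebraic}(a) that the kernel is free of rank zero. Your Steps 1--3 just spell out what the paper uses without comment: that $\ker U(\gamma)$ is finitely generated, that $\chi$ is additive on partial modules, and that $\chi$ is additive along the resulting exact sequence.
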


\begin{proof}
Let
\[
\pi\colon\mathcal N\to\mathcal N/\mathcal M
\]
be the quotient map. By Lemma~\ref{submod}, the sequence
\[
U(\mathcal M)
\xrightarrow{U(\gamma)}
U(\mathcal N)
\xrightarrow{U(\pi)}
U(\mathcal N/\mathcal M)
\longrightarrow 0
\]
is exact. Therefore,
\begin{align*}
\chi\bigl(\ker U(\gamma)\bigr)
&=
\chi\bigl(U(\mathcal M)\bigr)
+
\chi\bigl(U(\mathcal N/\mathcal M)\bigr)
-
\chi\bigl(U(\mathcal N)\bigr)\\
&=
\chi(\mathcal M)
+
\chi(\mathcal N/\mathcal M)
-
\chi(\mathcal N)\\
&=0.
\end{align*}

Moreover,
\[
\ker U(\gamma)\cap M=\{0\},
\]
because the universal enveloping maps are injective on their base
spaces. Proposition~\ref{freealgebraic}\textup{(a)} therefore implies
that $\ker U(\gamma)$ is free. A finitely generated free
$R$-module of Euler characteristic zero is trivial. Hence,
\[
\ker U(\gamma)=\{0\}.
\]
\end{proof}
\section{Counting homomorphisms to a random $R$-module of dimension $n$}

We retain the notation of the previous section. Throughout this section,
we assume that $K=\mathbb F_q$ is the finite field with $q$ elements.
If $V$ is a $K$-vector space, we write $d(V)$ for its dimension over
$K$. Let $F$ be the free group freely generated by a finite set $X$ of
cardinality $d$, and put $R=K[F]$.

If
\[
\phi\colon F\longrightarrow \operatorname{GL}_n(q)
\]
is a homomorphism, we denote the induced right $R$-module by
$(\mathbb F_q^n)_\phi$. For a finite-dimensional partial $R$-module
$\mathcal M$, define
\[
\Lambda_{\mathcal M}(n)
=
\frac{1}{|\operatorname{GL}_n(q)|^d}
\sum_{\phi\colon F\to\operatorname{GL}_n(q)}
\left|
\operatorname{Hom}_R
\bigl(\mathcal M,(\mathbb F_q^n)_\phi\bigr)
\right|.
\]
Thus, $\Lambda_{\mathcal M}(n)$ is the average number of
$R$-homomorphisms from $\mathcal M$ to a random $R$-module whose
underlying $K$-vector space has dimension $n$. By the universal
property of the enveloping module,
\[
\Lambda_{\mathcal M}(n)=\Lambda_{U(\mathcal M)}(n).
\]

From now on, we fix a finite-dimensional partial $R$-module
$\mathcal M$ with base space $M$. Let $P=P_{\mathcal M}$ be the poset
of partial $R$-submodules of $\mathcal M$. Since $K$ is finite and $M$
is finite-dimensional, the poset $P$ is finite.

We shall use the incidence algebra $I(P)$ of integer-valued functions
on
\[
\{(\mathcal N_1,\mathcal N_2)\in P\times P
  \mid \mathcal N_1\leq \mathcal N_2\}.
\]
In addition to pointwise addition and scalar multiplication, $I(P)$
has an associative multiplication given by convolution:
\[
(f*g)(\mathcal N_1,\mathcal N_2)
=
\sum_{\mathcal N_1\leq\mathcal N\leq\mathcal N_2}
f(\mathcal N_1,\mathcal N)
g(\mathcal N,\mathcal N_2).
\]
The identity element is the diagonal function
\[
\delta(\mathcal N_1,\mathcal N_2)
=
\begin{cases}
1,&\mathcal N_1=\mathcal N_2,\\
0,&\mathcal N_1<\mathcal N_2.
\end{cases}
\]
Every element of $I(P)$ whose diagonal entries are invertible is
invertible with respect to convolution. In particular, the zeta
function
\[
\zeta(\mathcal N_1,\mathcal N_2)=1
\qquad
(\mathcal N_1\leq\mathcal N_2)
\]
is invertible. Its inverse, denoted by $\mu$, is the M\"obius function
of $P$.

For $\mathcal N_1\leq\mathcal N_2\leq\mathcal M$, set
\[
\Phi_n(\mathcal N_1,\mathcal N_2)
=
q^{-\chi(\mathcal N_2)n}
\Lambda_{\mathcal N_2/\mathcal N_1}(n).
\]
The normalizing factor is chosen so that
\[
\Phi_n(\mathcal N_1,\mathcal N_2)
=
q^{-\chi(\mathcal N_1)n}
\]
whenever $\mathcal N_2/\mathcal N_1$ is free. We study $\Phi_n$ using
the approach of \cite{PP15}. For each $n$, the function $\Phi_n$ is an
element of $I(P)$.

Define
\[
R_n=\Phi_n*\mu,
\qquad
L_n=\mu*\Phi_n,
\qquad
C_n=\mu*\Phi_n*\mu.
\]
Then
\begin{align}
\label{derivation phi}
\Phi_n(\mathcal N_1,\mathcal N_2)
&=
\sum_{\mathcal N_1\leq\mathcal N\leq\mathcal N_2}
R_n(\mathcal N_1,\mathcal N)
=
\sum_{\mathcal N_1\leq\mathcal N\leq\mathcal N_2}
L_n(\mathcal N,\mathcal N_2),
\\
R_n(\mathcal N_1,\mathcal N_2)
&=
\sum_{\mathcal N_1\leq\mathcal N\leq\mathcal N_2}
C_n(\mathcal N_1,\mathcal N),
\nonumber\\
L_n(\mathcal N_1,\mathcal N_2)
&=
\sum_{\mathcal N_1\leq\mathcal N\leq\mathcal N_2}
C_n(\mathcal N,\mathcal N_2).
\nonumber
\end{align}

\begin{pro}\label{rightnonalgebraic}
Let
\[
\mathcal N_1\leq\mathcal N_2\leq\mathcal N_3\leq\mathcal M
\]
be a chain of partial $R$-submodules. If
$\mathcal N_3/\mathcal N_2$ is free, then
\[
\Phi_n(\mathcal N_1,\mathcal N_2)
=
\Phi_n(\mathcal N_1,\mathcal N_3).
\]
In particular,
\[
R_n(\mathcal N_1,\mathcal N_2)=0
\]
if $\mathcal N_2/\mathcal N_1$ is not algebraic.
\end{pro}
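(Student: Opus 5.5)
The first assertion reduces to a splitting of enveloping modules. The second follows from it by a Weisner-type cancellation in the lattice $P$, following the strategy of \cite{PP15}.

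\textbf{Step 1: the quotients fit into a short exact sequence.} First I check that $\mathcal N_2/\mathcal N_1$ is a partial submodule of $\mathcal N_3/\mathcal N_1$. I also check that the quotient of the two is canonically isomorphic to $\mathcal N_3/\mathcal N_2$; both are routine from the definitions. Then Lemma~\ref{submod} and Corollary~\ref{submod2} give a short exact sequence
\[
0\to U(\mathcal N_2/\mathcal N_1)\to U(\mathcal N_3/\mathcal N_1)\to U(\mathcal N_3/\mathcal N_2)\to 0 .
\]

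\textbf{Step 2: splitting and counting.} By hypothesis $U(\mathcal N_3/\mathcal N_2)$ is free, so the sequence splits and
\[
U(\mathcal N_3/\mathcal N_1)\cong U(\mathcal N_2/\mathcal N_1)\oplus R^k,
\qquad k=\chi(\mathcal N_3/\mathcal N_2).
\]
For every $\phi$ this gives
\[
|\operatorname{Hom}_R(U(\mathcal N_3/\mathcal N_1),(\mathbb F_q^n)_\phi)|
=q^{kn}\,|\operatorname{Hom}_R(U(\mathcal N_2/\mathcal N_1),(\mathbb F_q^n)_\phi)|,
\]
and averaging over $\phi$ yields $\Lambda_{\mathcal N_3/\mathcal N_1}(n)=q^{kn}\Lambda_{\mathcal N_2/\mathcal N_1}(n)$.

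\textbf{Step 3: additivity of $\chi$.} It remains to show $k=\chi(\mathcal N_3)-\chi(\mathcal N_2)$. This follows from the formula $\chi=m-\sum_x m_x$. Indeed, $m$ is additive, and for a partial submodule $\mathcal N\le\mathcal N'$ we have $\dim\big((U_x+N)/N\big)=\dim U_x-\dim(U_x\cap N)$, so each $m_x$ is additive too. This proves the first assertion.

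\textbf{Step 4: finding a free quotient inside $P$.} Now suppose $\mathcal N_2/\mathcal N_1$ is not algebraic. Let $L\le U(\mathcal N_2/\mathcal N_1)$ be the smallest submodule with free quotient; it is algebraic. By Proposition~\ref{freealgebraic}(b), $L$ is generated by its intersection with the base space $N_2/N_1$. Let $\mathcal N$ be the preimage in $\mathcal N_2$ of the partial submodule generated by that intersection. By Lemma~\ref{submod} and Corollary~\ref{submodule generated}, $U(\mathcal N_2/\mathcal N)\cong U(\mathcal N_2/\mathcal N_1)/L$ is free and nonzero. Hence $\mathcal N_1\le\mathcal N<\mathcal N_2$ and $\mathcal N_2/\mathcal N$ is free.

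\textbf{Step 5: $\Phi_n(\mathcal N_1,\cdot)$ factors through meeting with $\mathcal N$.} Note that $P$ is a lattice, with meet given by intersection and join by the generated partial submodule. For $\mathcal N_1\le\mathcal Y\le\mathcal N_2$ I claim that $\mathcal Y/(\mathcal Y\cap\mathcal N)$ is free. Its base space injects into that of $\mathcal N_2/\mathcal N$, giving a homomorphism of partial modules. If its image is a partial submodule, Corollary~\ref{submod2} embeds $U(\mathcal Y/\mathcal Y\cap\mathcal N)$ into the free module $U(\mathcal N_2/\mathcal N)$. Submodules of free $R$-modules are free, so the claim follows. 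Given the claim, the first part of the proposition gives $\Phi_n(\mathcal N_1,\mathcal Y)=\Phi_n(\mathcal N_1,\mathcal Y\cap\mathcal N)$.

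\textbf{Step 6: Weisner cancellation.} Grouping the sum by $\mathcal Z=\mathcal Y\cap\mathcal N$ gives
\[
R_n(\mathcal N_1,\mathcal N_2)=\sum_{\mathcal Z}\Phi_n(\mathcal N_1,\mathcal Z)\sum_{\mathcal Y\wedge\mathcal N=\mathcal Z}\mu(\mathcal Y,\mathcal N_2).
\]
Each inner sum vanishes by the dual form of Weisner's theorem, applied in the lattice $[\mathcal Z,\mathcal N_2]$ with the element $\mathcal N<\mathcal N_2$. Hence $R_n(\mathcal N_1,\mathcal N_2)=0$.

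\textbf{Main obstacle.} The delicate point is the claim in Step 5. One must verify that the image of $\mathcal Y/(\mathcal Y\cap\mathcal N)$ in $\mathcal N_2/\mathcal N$ is a genuine partial submodule, i.e.\ that the domains $U_x$ match up. If this fails, I would try to get freeness directly. The approach would be to apply Proposition~\ref{freealgebraic}(a) inside $U(\mathcal N_2/\mathcal N)$, or to compare Euler characteristics through Proposition~\ref{isomorphism}.
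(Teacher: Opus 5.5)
Your Steps 1--3 are exactly the paper's proof of the first assertion; you also check that $\chi$ is additive, which the paper uses tacitly. Steps 4 and 6 are sound. For the second assertion the paper only cites \cite{PP15}, so your Weisner cancellation is a genuinely different route.

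\textbf{The gap is Step 5.} This is the one place where your route needs more than the first assertion, and the argument you give does not work. You prove $\mathcal Y/(\mathcal Y\cap\mathcal N)$ free only under the assumption that its image in $\mathcal N_2/\mathcal N$ is a partial submodule onto which it maps isomorphically. That assumption fails in general, even for the $\mathcal N$ produced in Step 4. Take $X=\{x,y\}$, $\mathcal N_1=0$, and $\mathcal N_2$ with base space $\langle g,a,b,c\rangle$, $g\cdot y=g$, $a\cdot x=b$, $b\cdot x=c$.
\begin{itemize}
\item Then $U(\mathcal N_2)\cong R/(y-1)R\oplus R$, Step 4 gives $\mathcal N=\langle g\rangle$, and $U(\mathcal N_2/\mathcal N)\cong R$.
\item The plane $Y=\langle g+a,\,a+c\rangle$ meets $\langle a,b\rangle$, $\langle b,c\rangle$ and $\langle g\rangle$ trivially. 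So it is a partial submodule with no partial action, $\mathcal Y\cap\mathcal N=0$, and $U(\mathcal Y)\cong R^2$.
\item $R^2$ cannot embed in $R$. Concretely, the image $\langle\bar a,\bar c\rangle$ of $Y$ is not a partial submodule, since $\bar a\cdot x=\bar b$.
\end{itemize}
So Step 5 as written does not establish $\Phi_n(\mathcal N_1,\mathcal Y)=\Phi_n(\mathcal N_1,\mathcal Y\cap\mathcal N)$, and Step 6 has nothing to cancel.

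\textbf{The claim is nevertheless true.} The repair is to apply Proposition~\ref{freealgebraic}(a) to the kernel, not inside $U(\mathcal N_2/\mathcal N)$. Let $\gamma\colon\mathcal Y/(\mathcal Y\cap\mathcal N)\to\mathcal N_2/\mathcal N$ be the induced map and $W=\ker U(\gamma)$.
\begin{itemize}
\item The image of $U(\gamma)$ is a finitely generated submodule of a free module, hence free. So $W$ is a direct summand of $U(\mathcal Y/(\mathcal Y\cap\mathcal N))$ and is finitely generated.
\item $W$ meets the base space trivially, because $U(\gamma)\circ\alpha_{\mathcal Y/(\mathcal Y\cap\mathcal N)}=\alpha_{\mathcal N_2/\mathcal N}\circ\gamma$ is injective there by Proposition~\ref{universalenveloping}.
\item Hence $W$ is free, and so is $U(\mathcal Y/(\mathcal Y\cap\mathcal N))\cong W\oplus\operatorname{im}U(\gamma)$.
\end{itemize}

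\textbf{Alternatively, drop Steps 5--6} and induct on $\dim(N_2/N_1)$; this is the M\"obius argument the paper invokes.
\begin{itemize}
\item If $\mathcal N_1\le\mathcal Q\le\mathcal N_2$ with $\mathcal Q/\mathcal N_1$ algebraic, then $U(\mathcal Q/\mathcal N_1)$ embeds in $U(\mathcal N_2/\mathcal N_1)$ by Corollary~\ref{submod2}.
\item Its image maps to zero in the free module $U(\mathcal N_2/\mathcal N_1)/L$, so Corollary~\ref{submodule generated} gives $\mathcal Q\le\mathcal N$.
\item By induction, only such $\mathcal Q$ contribute to $\Phi_n(\mathcal N_1,\mathcal N)=\sum_{\mathcal Q\le\mathcal N}R_n(\mathcal N_1,\mathcal Q)$. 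They also account for all of $\Phi_n(\mathcal N_1,\mathcal N_2)=\sum_{\mathcal Q\le\mathcal N_2}R_n(\mathcal N_1,\mathcal Q)$, except possibly the term $\mathcal Q=\mathcal N_2$.
\item The two sums are equal by the first assertion, so $R_n(\mathcal N_1,\mathcal N_2)=0$.
\end{itemize}
This route needs no compatibility of $\Phi_n$ with meets. Your route, once repaired, proves more: that $\Phi_n(\mathcal N_1,\cdot)$ factors through $\mathcal Y\mapsto\mathcal Y\cap\mathcal N$.
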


\begin{proof}
Let
\[
\gamma\colon
\mathcal N_2/\mathcal N_1
\longrightarrow
\mathcal N_3/\mathcal N_1
\]
be the inclusion, and let
\[
\pi\colon
\mathcal N_3/\mathcal N_1
\longrightarrow
\mathcal N_3/\mathcal N_2
\]
be the quotient map. By Lemma~\ref{submod} and
Corollary~\ref{submod2}, the sequence
\[
0
\longrightarrow U(\mathcal N_2/\mathcal N_1)
\xrightarrow{U(\gamma)}
U(\mathcal N_3/\mathcal N_1)
\xrightarrow{U(\pi)}
U(\mathcal N_3/\mathcal N_2)
\longrightarrow 0
\]
is exact. If $\mathcal N_3/\mathcal N_2$ is free, then this sequence
splits, and hence
\[
U(\mathcal N_3/\mathcal N_1)
\cong
U(\mathcal N_2/\mathcal N_1)
\oplus
U(\mathcal N_3/\mathcal N_2).
\]
Consequently,
\[
\Lambda_{\mathcal N_3/\mathcal N_1}(n)
=
q^{\chi(\mathcal N_3/\mathcal N_2)n}
\Lambda_{\mathcal N_2/\mathcal N_1}(n).
\]
It follows that
\begin{align*}
\Phi_n(\mathcal N_1,\mathcal N_3)
&=
q^{-\chi(\mathcal N_3)n}
\Lambda_{\mathcal N_3/\mathcal N_1}(n)\\
&=
q^{-\chi(\mathcal N_3)n}
q^{\chi(\mathcal N_3/\mathcal N_2)n}
\Lambda_{\mathcal N_2/\mathcal N_1}(n)\\
&=
q^{-\chi(\mathcal N_2)n}
\Lambda_{\mathcal N_2/\mathcal N_1}(n)\\
&=
\Phi_n(\mathcal N_1,\mathcal N_2).
\end{align*}
The final assertion follows from the same M\"obius-inversion argument
as in \cite[Proposition~5.1 and Remark~5.2]{PP15}.
\end{proof}

For an $R$-module $V$, let
\[
\operatorname{Hom}^{\mathrm{inj}}_R(\mathcal M,V)
\]
denote the set of injective homomorphisms from $\mathcal M$ to $V$.
For integers $n\geq l\geq 1$, define
\[
T_{n,l}(q)
=
(1-q^{-n})(1-q^{-n+1})\cdots(1-q^{-n+l-1}),
\]
and put $T_{n,0}(q)=1$.

\begin{rem}\label{o: meaning of T}
Let $V$ and $W$ be finite-dimensional $K$-vector spaces with
$d(V)\leq d(W)$. The number of injective $K$-linear maps from $V$ to
$W$ is
\[
q^{d(W)d(V)}T_{d(W),d(V)}(q).
\]

Similarly, suppose that $V,V'\leq W$ and that
$\psi\colon V\to V'$ is a $K$-linear isomorphism. The number of
automorphisms of $W$ extending $\psi$ is
\[
\frac{|\operatorname{GL}_{d(W)}(q)|}
{q^{d(W)d(V)}T_{d(W),d(V)}(q)}.
\]
\end{rem}

\begin{pro}\label{p: first formula for L}
Let
\[
\mathcal N_1\leq\mathcal N_2\leq\mathcal M
\]
be a chain of partial $R$-submodules. Then
\begin{align*}
L_n(\mathcal N_1,\mathcal N_2)
&=
\frac{q^{-\chi(\mathcal N_2)n}}
{|\operatorname{GL}_n(q)|^d}
\sum_{\phi\colon F\to\operatorname{GL}_n(q)}
\left|
\operatorname{Hom}^{\mathrm{inj}}_R
\bigl(
\mathcal N_2/\mathcal N_1,
(\mathbb F_q^n)_\phi
\bigr)
\right|\\
&=
q^{-\chi(\mathcal N_1)n}
\frac{
T_{n,m(\mathcal N_2/\mathcal N_1)}(q)
}{
\displaystyle
\prod_{x\in X}
T_{n,m_x(\mathcal N_2/\mathcal N_1)}(q)
}.
\end{align*}
\end{pro}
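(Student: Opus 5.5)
The plan has two parts: a Möbius-inversion identity giving the first equality, and a direct count of injective homomorphisms giving the second.

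\textbf{First equality.} Write $\mathcal Q=\mathcal N_2/\mathcal N_1$ and $V=(\mathbb F_q^n)_\phi$. Every homomorphism $\beta\colon\mathcal Q\to V$ has a kernel, which is a partial submodule of $\mathcal Q$. By the correspondence theorem for partial quotients it equals $\mathcal N/\mathcal N_1$ for a unique $\mathcal N$ with $\mathcal N_1\le\mathcal N\le\mathcal N_2$. The map $\beta$ then factors through a unique injective homomorphism $(\mathcal N_2/\mathcal N_1)/(\mathcal N/\mathcal N_1)\cong\mathcal N_2/\mathcal N\to V$. I will check that the partial quotient structures agree, i.e.\ that $\overline U_x$ is computed compatibly, and that a linear map on $N_2/N$ that is a homomorphism on $\mathcal N_2/\mathcal N_1$ is exactly a homomorphism on $\mathcal N_2/\mathcal N$; both follow directly from the definitions. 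This gives
\[
|\operatorname{Hom}_R(\mathcal N_2/\mathcal N_1,V)|=\sum_{\mathcal N_1\le\mathcal N\le\mathcal N_2}|\operatorname{Hom}^{\mathrm{inj}}_R(\mathcal N_2/\mathcal N,V)|.
\]
Multiplying by $q^{-\chi(\mathcal N_2)n}$ and averaging over $\phi$ yields $\Phi_n=\zeta*\widetilde L_n$, where $\widetilde L_n(\mathcal N,\mathcal N_2)$ denotes the normalized average count of injective homomorphisms. By \eqref{derivation phi} we also have $\Phi_n=\zeta*L_n$, so $\widetilde L_n=L_n$ because $\zeta$ is invertible.

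\textbf{Second equality.} I count pairs $(\phi,\beta)$ with $\beta$ injective by first choosing $\beta$. Put $m=m(\mathcal Q)$ and $m_x=m_x(\mathcal Q)$, and let $\overline\phi_x\colon\overline U_x\to\overline V_x$ be the partial maps of $\mathcal Q$. By Remark~\ref{o: meaning of T}, the number of injective linear maps $\beta\colon Q\to\mathbb F_q^n$ is $q^{nm}T_{n,m}(q)$; for large $n$ we have $m\le n$, and otherwise both sides vanish. For a fixed injective $\beta$, the homomorphism condition says exactly that $\phi(x)$ extends the isomorphism $\beta\overline\phi_x\beta^{-1}\colon\beta(\overline U_x)\to\beta(\overline V_x)$, for each $x\in X$. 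The choices for different $x$ are independent, and by the same remark the number of such $\phi(x)$ is $|\operatorname{GL}_n(q)|/(q^{nm_x}T_{n,m_x}(q))$. Hence the average count equals
\[
q^{n(m-\sum_x m_x)}\frac{T_{n,m}(q)}{\prod_x T_{n,m_x}(q)}=q^{\chi(\mathcal Q)n}\frac{T_{n,m}(q)}{\prod_x T_{n,m_x}(q)}.
\]

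\textbf{Conclusion and main obstacle.} Multiplying by $q^{-\chi(\mathcal N_2)n}$ gives the stated formula, once we know $\chi(\mathcal N_2/\mathcal N_1)=\chi(\mathcal N_2)-\chi(\mathcal N_1)$. This additivity follows from the short exact sequence of enveloping modules given by Lemma~\ref{submod} and Corollary~\ref{submod2}. It can also be seen directly: $\dim(U_x+N_1)/N_1=m_x(\mathcal N_2)-m_x(\mathcal N_1)$ because $N_1\cap U_x$ is exactly the domain of $x$ in $\mathcal N_1$. The step needing the most care is the kernel–factorization bijection in the first part, specifically identifying the iterated partial quotient with $\mathcal N_2/\mathcal N$ so that injective homomorphisms are counted correctly.
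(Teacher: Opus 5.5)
Your proposal is correct and follows the paper's proof essentially step for step: the kernel/factorization bijection gives $\Phi_n=\zeta*\widehat L_n$, hence $\widehat L_n=L_n$, and counting pairs $(\phi,\iota)$ via Remark~\ref{o: meaning of T} gives the closed formula. As in the paper, this holds for $n\geq m(\mathcal N_2/\mathcal N_1)$; for smaller $n$ the right-hand side can be $0/0$ rather than $0$, so your remark that both sides vanish is not quite right. For the additivity $\chi(\mathcal N_2/\mathcal N_1)=\chi(\mathcal N_2)-\chi(\mathcal N_1)$, rely on your direct dimension count rather than on Corollary~\ref{submod2}, since that corollary's proof already uses this identity.
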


\begin{proof}
For
$\mathcal N_1\leq\mathcal N_2\leq\mathcal M$, define
\[
\widehat L_n(\mathcal N_1,\mathcal N_2)
=
\frac{q^{-\chi(\mathcal N_2)n}}
{|\operatorname{GL}_n(q)|^d}
\sum_{\phi\colon F\to\operatorname{GL}_n(q)}
\left|
\operatorname{Hom}^{\mathrm{inj}}_R
\bigl(
\mathcal N_2/\mathcal N_1,
(\mathbb F_q^n)_\phi
\bigr)
\right|.
\]
To prove the first equality, it is enough to verify that
\[
\Phi_n(\mathcal N_1,\mathcal N_2)
=
\sum_{\mathcal N_1\leq\mathcal N\leq\mathcal N_2}
\widehat L_n(\mathcal N,\mathcal N_2).
\]
For a fixed homomorphism
$\phi\colon F\to\operatorname{GL}_n(q)$, every homomorphism
\[
f\colon
\mathcal N_2/\mathcal N_1
\longrightarrow
(\mathbb F_q^n)_\phi
\]
has a unique kernel of the form
$\mathcal N/\mathcal N_1$, where
$\mathcal N_1\leq\mathcal N\leq\mathcal N_2$, and it induces an
injective homomorphism
\[
\overline f\colon
\mathcal N_2/\mathcal N
\longrightarrow
(\mathbb F_q^n)_\phi.
\]
Thus,
\[
\left|
\operatorname{Hom}_R
\bigl(
\mathcal N_2/\mathcal N_1,
(\mathbb F_q^n)_\phi
\bigr)
\right|
=
\sum_{\mathcal N_1\leq\mathcal N\leq\mathcal N_2}
\left|
\operatorname{Hom}^{\mathrm{inj}}_R
\bigl(
\mathcal N_2/\mathcal N,
(\mathbb F_q^n)_\phi
\bigr)
\right|,
\]
which proves the first equality by M\"obius inversion.

For the second equality, let $\mathcal P$ be the set of pairs
$(\phi,\iota)$, where
\[
\phi\colon F\to\operatorname{GL}_n(q)
\]
is a homomorphism and
\[
\iota\in
\operatorname{Hom}^{\mathrm{inj}}_R
\bigl(
\mathcal N_2/\mathcal N_1,
(\mathbb F_q^n)_\phi
\bigr).
\]
Let $N$ be the base space of $\mathcal N_2/\mathcal N_1$. For
$n\geq m(\mathcal N_2/\mathcal N_1)$, the number of injective
$K$-linear maps
\[
\iota\colon N\longrightarrow\mathbb F_q^n
\]
is
\[
q^{n\,m(\mathcal N_2/\mathcal N_1)}
T_{n,m(\mathcal N_2/\mathcal N_1)}(q).
\]
Once $\iota$ is fixed, for each $x\in X$ the partial action of $x$
prescribes an isomorphism between two subspaces of $\mathbb F_q^n$ of
dimension $m_x(\mathcal N_2/\mathcal N_1)$. By
Remark~\ref{o: meaning of T}, the number of possible extensions to an
element of $\operatorname{GL}_n(q)$ is
\[
\frac{|\operatorname{GL}_n(q)|}
{q^{n\,m_x(\mathcal N_2/\mathcal N_1)}
T_{n,m_x(\mathcal N_2/\mathcal N_1)}(q)}.
\]
Therefore,
\begin{align*}
|\mathcal P|
&=
\frac{
q^{n\,m(\mathcal N_2/\mathcal N_1)}
T_{n,m(\mathcal N_2/\mathcal N_1)}(q)
|\operatorname{GL}_n(q)|^d
}{
\displaystyle
\prod_{x\in X}
q^{n\,m_x(\mathcal N_2/\mathcal N_1)}
T_{n,m_x(\mathcal N_2/\mathcal N_1)}(q)
}\\
&=
q^{\chi(\mathcal N_2/\mathcal N_1)n}
\frac{
T_{n,m(\mathcal N_2/\mathcal N_1)}(q)
|\operatorname{GL}_n(q)|^d
}{
\displaystyle
\prod_{x\in X}
T_{n,m_x(\mathcal N_2/\mathcal N_1)}(q)
}.
\end{align*}
Since
\[
-\chi(\mathcal N_2)
+\chi(\mathcal N_2/\mathcal N_1)
=
-\chi(\mathcal N_1),
\]
the desired formula follows.
\end{proof}

\begin{exam}
Let $X=\{x_1,\ldots,x_d\}$ and let $V=K$. For
$i=1,\ldots,t$, put
\[
U_{x_i}=V_{x_i}=V,
\]
and for $i=t+1,\ldots,d$, put
\[
U_{x_i}=V_{x_i}=\{0\}.
\]
Let every nonzero partial action $\phi_{x_i}$ be the identity, and set
\[
\mathcal V=(V,\phi_x\colon U_x\to V_x)_{x\in X}.
\]
If $H=\langle x_1,\ldots,x_t\rangle$, then
\[
U(\mathcal V)\cong K[H\backslash F].
\]
Moreover,
\begin{align*}
\Lambda_{K[H\backslash F]}(n)
&=
\Lambda_{\mathcal V}(n)\\
&=
q^{\chi(\mathcal V)n}
\Phi_n(\{0\},\mathcal V)\\
&=
q^{\chi(\mathcal V)n}
\bigl(
L_n(\mathcal V,\mathcal V)
+
L_n(\{0\},\mathcal V)
\bigr)\\
&=
1+\frac{1}{(q^n-1)^{t-1}}.
\end{align*}
\end{exam}

By a \emph{filtration} $\mathcal F$ of a finite-dimensional
$K$-vector space $U$, we mean a chain
\[
\{0\}=U_0<U_1<\cdots<U_{d(U)}=U
\]
such that
\[
\dim_K(U_j/U_{j-1})=1
\]
for every $1\leq j\leq d(U)$.

Let
\[
\mathcal F=(\{0\}=U_0<U_1<\cdots<U_{d(U)}=U)
\]
be a filtration of $U$, and let $W\leq V\leq U$. The filtration
\emph{induced} by $\mathcal F$ on $V/W$, denoted by
$\mathcal F_{\restriction V/W}$, is obtained by removing repetitions
from the chain
\[
W/W
\leq
\frac{(U_1\cap V)+W}{W}
\leq
\cdots
\leq
\frac{(U_{d(U)}\cap V)+W}{W}
=
V/W.
\]

Let
\[
\mathcal F=(\{0\}=V_0<V_1<\cdots<V_{d(V)}=V)
\]
be a filtration of a finite-dimensional $K$-vector space $V$. For
$k=1,\ldots,d(V)$, put
\[
Q_k^{\mathcal F}
=
\{Kv\in\mathbb P(V)\mid v\in V_k\setminus V_{k-1}\}.
\]
Then
\[
|Q_k^{\mathcal F}|
=
\frac{|V_k|-|V_{k-1}|}{q-1}
=
q^{d(V_{k-1})}
=
q^{k-1}.
\]
We also write
\[
\overline Q^{\mathcal F}
=
(Q_k^{\mathcal F})_{1\leq k\leq d(V)}.
\]

Let $\mathcal X=(X_i)_{i\in I}$ be a family of sets. Denote by
$\mathcal S(\mathcal X)$ the collection of finite subsets $S$ of the
disjoint union
\[
\bigsqcup_{i\in I}X_i
\]
such that
\[
|S\cap X_i|\leq 1
\]
for every $i\in I$. Even when the sets $X_i$ intersect in an ambient
space, we regard them as disjoint copies in the coproduct. For
$l\geq0$, let $\mathcal S_l(\mathcal X)$ be the collection of elements
of $\mathcal S(\mathcal X)$ of cardinality $l$.

\begin{lem}\label{l: expanding the Ts}
Let $U$ be a finite-dimensional $K$-vector space, let
$n\geq d(U)$, and let $\mathcal F$ be a filtration of $U$. Then
\begin{align}
\label{l: calculating T}
T_{n,d(U)}(q)
&=
\sum_{j=0}^{d(U)}
(-1)^j q^{-nj}
\left|
\mathcal S_j(\overline Q^{\mathcal F})
\right|\\
&=
\sum_{S\in\mathcal S(\overline Q^{\mathcal F})}
(-1)^{|S|}q^{-n|S|}.
\nonumber
\end{align}
More generally, let $U_1,\ldots,U_d$ be finite-dimensional
$K$-vector spaces, let
\[
n\geq\max_{1\leq i\leq d}d(U_i),
\]
and let $\mathcal F_i$ be a filtration of $U_i$. Then
\begin{align*}
\prod_{i=1}^d T_{n,d(U_i)}(q)
&=
\sum_j
(-1)^j q^{-nj}
\left|
\mathcal S_j
\bigl(
(\overline Q^{\mathcal F_i})_{1\leq i\leq d}
\bigr)
\right|\\
&=
\sum_{
S\in
\mathcal S((\overline Q^{\mathcal F_i})_{1\leq i\leq d})
}
(-1)^{|S|}q^{-n|S|}.
\end{align*}
\end{lem}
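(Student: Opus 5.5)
The plan is to expand the product defining $T_{n,d(U)}(q)$ factor by factor and match each factor with one layer of the filtration. By definition,
\[
T_{n,d(U)}(q)=\prod_{k=1}^{d(U)}\bigl(1-q^{-n+k-1}\bigr)=\prod_{k=1}^{d(U)}\bigl(1-q^{-n}\,q^{k-1}\bigr).
\]
The key observation is that $q^{k-1}=|Q_k^{\mathcal F}|$, as computed just before the statement. So the $k$-th factor can be rewritten as
\[
1-q^{-n}|Q_k^{\mathcal F}|=\sum_{S_k}(-1)^{|S_k|}q^{-n|S_k|},
\]
where $S_k$ runs over the subsets of $Q_k^{\mathcal F}$ of size at most one. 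The empty set contributes the term $1$, and each singleton contributes $-q^{-n}$.

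Next, I multiply out the product over $k=1,\dots,d(U)$ by distributivity. Choosing one subset $S_k$ from each factor is the same as choosing a finite subset $S=\bigsqcup_k S_k$ of the disjoint union $\bigsqcup_k Q_k^{\mathcal F}$ that meets each $Q_k^{\mathcal F}$ in at most one point. This is exactly an element of $\mathcal S(\overline Q^{\mathcal F})$. Under this bijection the signs and powers multiply to $(-1)^{|S|}q^{-n|S|}$, which gives the second expression in the statement. Grouping the terms according to $|S|=j$ then gives the first expression, with coefficient $|\mathcal S_j(\overline Q^{\mathcal F})|$.

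The hypothesis $n\ge d(U)$ is only needed so that $T_{n,d(U)}$ is the product in its defined range. The identity itself is formal once the factors are written in this form.

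For the general statement, I apply the single-space identity to each pair $(U_i,\mathcal F_i)$ and multiply the resulting sums over $i=1,\dots,d$. The family $(\overline Q^{\mathcal F_i})_{1\le i\le d}$ is the concatenation of the families $\overline Q^{\mathcal F_i}$, and by the convention in the paper its members are treated as disjoint copies. Hence an element of $\mathcal S\bigl((\overline Q^{\mathcal F_i})_i\bigr)$ is the same as a tuple $(S^{(1)},\dots,S^{(d)})$ with $S^{(i)}\in\mathcal S(\overline Q^{\mathcal F_i})$, and its size is $|S|=\sum_i|S^{(i)}|$. So the product of the sums is again the sum of $(-1)^{|S|}q^{-n|S|}$, and grouping by size gives the first form.

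There is no real obstacle here. The only point to check is the bookkeeping with the disjoint copies: the sets $Q_k^{\mathcal F_i}$ for different $i$, or even for the same $U$, might coincide as subsets of some ambient projective space. One must count them as separate, so that the bijection with tuples of choices holds.
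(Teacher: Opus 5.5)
Your proposal is correct and takes essentially the same route as the paper. The paper writes $\prod_k(1-a_ks)=\sum_j(-1)^jc_js^j$ in $\mathbb Z[s]$, where $a_k=|Q_k^{\mathcal F}|=q^{k-1}$ and $c_j=|\mathcal S_j(\overline Q^{\mathcal F})|$, substitutes $s=q^{-n}$, and treats the case of several filtrations as a single family of disjoint copies, which is exactly your distributivity argument.
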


\begin{proof}
Let $\mathcal X=(X_i)_{1\leq i\leq l}$ be a family of finite sets and
write $|X_i|=a_i$. In $\mathbb Z[s]$ we have
\[
(1-a_1s)\cdots(1-a_ls)
=
\sum_{j=0}^l(-1)^j c_js^j,
\]
where
\[
c_j=|\mathcal S_j(\mathcal X)|.
\]
Taking $\mathcal X=\overline Q^{\mathcal F}$ and $s=q^{-n}$ proves
the first assertion. The second is proved in exactly the same way.
\end{proof}

For a set $A$, write
\[
A^{<\omega}=\bigcup_{l\geq0}A^l
\]
for the set of finite tuples with entries in $A$. The set $A^0$
contains a unique element, the empty tuple.

The following standard generating-function identity will be used
below.

\begin{lem}\label{invertingL}
Let $\mathcal T$ be a collection of subsets of a finite set and assume
that $\emptyset\in\mathcal T$. For a tuple
\[
T=(S_1,\ldots,S_l)
\in
(\mathcal T\setminus\{\emptyset\})^{<\omega},
\]
put
\[
|T|=\sum_{i=1}^l|S_i|
\qquad\text{and}\qquad
l(T)=l.
\]
For the empty tuple, put $|T|=l(T)=0$. Then
\[
\left(
\sum_{S\in\mathcal T}s^{|S|}
\right)^{-1}
=
\sum_{
T\in(\mathcal T\setminus\{\emptyset\})^{<\omega}
}
(-1)^{l(T)}s^{|T|}
\]
in $\mathbb Z[[s]]$.
\end{lem}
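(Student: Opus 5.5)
Write $P(s)=\sum_{S\in\mathcal T}s^{|S|}$. Since $\emptyset\in\mathcal T$ contributes the constant term $1$, we have
\[
P(s)=1-Q(s),\qquad Q(s)=-\sum_{S\in\mathcal T\setminus\{\emptyset\}}s^{|S|}.
\]
The plan is to expand $P(s)^{-1}$ as a geometric series and then read off the coefficients combinatorially.

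First I will check that the series makes sense. Every nonempty $S$ has $|S|\ge 1$, so $Q(s)$ lies in the ideal $s\,\mathbb Z[[s]]$. Hence $Q(s)^l\in s^l\mathbb Z[[s]]$, and the series $\sum_{l\ge0}Q(s)^l$ converges in the $s$-adic topology. It is then a formal identity that this series is the inverse of $1-Q(s)$: multiplying the partial sum by $1-Q(s)$ gives $1-Q(s)^{l+1}$, which tends to $1$.

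Next I will expand each power by distributivity:
\[
Q(s)^l=(-1)^l\sum_{(S_1,\dots,S_l)\in(\mathcal T\setminus\{\emptyset\})^l}s^{|S_1|+\cdots+|S_l|}.
\]
This is a finite sum because $\mathcal T$ is finite, being a collection of subsets of a finite set. Each term is $(-1)^{l(T)}s^{|T|}$ for $T=(S_1,\dots,S_l)$. The case $l=0$ gives the empty tuple and contributes the term $1$.

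Summing over $l$ gives the claimed sum over $(\mathcal T\setminus\{\emptyset\})^{<\omega}$. This rearrangement is legitimate because for each exponent $k$ only tuples with $l\le k$ can contribute to $s^k$, so each coefficient is a finite sum.

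There is no real obstacle here. The only point needing care is this local finiteness of coefficients, which is what justifies the infinite sum in $\mathbb Z[[s]]$; it follows directly from excluding $\emptyset$, which forces $|T|\ge l(T)$.
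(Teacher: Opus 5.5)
Your proof is correct and complete. The paper gives no proof of this lemma; it simply calls it a standard generating-function identity. Your argument, expanding $(1-Q(s))^{-1}=\sum_{l\ge 0}Q(s)^l$ with local finiteness of coefficients coming from $|T|\ge l(T)$, is exactly the standard justification the paper implicitly relies on.
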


Let
\[
\mathcal L=(L,\phi_x\colon W_x\to Z_x)_{x\in X}
\]
be a finite-dimensional partial $R$-module, and let $\mathcal F$ be a
filtration of its base space $L$. Define
\[
\mathbb T_{\mathcal L}^{\mathcal F}
=
\mathcal S(\overline Q^{\mathcal F})
\times
\left(
\mathcal S
\bigl(
(\overline Q^{\mathcal F_{\restriction W_x}})_{x\in X}
\bigr)
\setminus\{\emptyset\}
\right)^{<\omega}.
\]
If
\[
T=(S,(S_j)_{j=1}^l)
\in
\mathbb T_{\mathcal L}^{\mathcal F},
\]
set
\[
|T|=|S|+\sum_{j=1}^l|S_j|
\qquad\text{and}\qquad
\epsilon(T)=(-1)^{|T|+l}.
\]
We denote by $\langle T\rangle$ the partial $R$-submodule of
$\mathcal L$ generated by the one-dimensional subspaces represented
by the elements of
\[
S\cup\bigcup_{j=1}^l S_j.
\]

\begin{pro}\label{L formula}
Let
\[
\mathcal N_1\leq\mathcal N_2\leq\mathcal M
\]
be a chain of partial $R$-submodules, and put
\[
\mathcal L=\mathcal N_2/\mathcal N_1.
\]
Let $\mathcal F$ be a filtration of the base space of $\mathcal L$.
Then
\[
L_n(\mathcal N_1,\mathcal N_2)
=
q^{-\chi(\mathcal N_1)n}
\sum_{T\in\mathbb T_{\mathcal L}^{\mathcal F}}
\epsilon(T)q^{-|T|n}.
\]
\end{pro}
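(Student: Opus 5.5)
Our plan is to combine the closed formula of Proposition~\ref{p: first formula for L} with the expansions of Lemma~\ref{l: expanding the Ts} and Lemma~\ref{invertingL}. By Proposition~\ref{p: first formula for L},
\[
L_n(\mathcal N_1,\mathcal N_2)
=
q^{-\chi(\mathcal N_1)n}\,
T_{n,m(\mathcal L)}(q)\cdot\Bigl(\prod_{x\in X}T_{n,m_x(\mathcal L)}(q)\Bigr)^{-1},
\]
where $\mathcal L=(L,\phi_x\colon W_x\to Z_x)_{x\in X}$. We therefore only need to expand the numerator and the inverse of the denominator as signed sums over the two factors of $\mathbb T_{\mathcal L}^{\mathcal F}$, and then multiply the two expansions.

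For the numerator we apply the first part of Lemma~\ref{l: expanding the Ts} to $U=L$ with the filtration $\mathcal F$. This gives
\[
T_{n,m(\mathcal L)}(q)=\sum_{S\in\mathcal S(\overline Q^{\mathcal F})}(-1)^{|S|}q^{-n|S|}.
\]
For the denominator we apply the second part of the same lemma to the spaces $U_x=W_x$ with the induced filtrations $\mathcal F_{\restriction W_x}$. Note that $d(W_x)=m_x(\mathcal L)$, and $\mathcal F_{\restriction W_x}$ is indeed a filtration of $W_x$ with one-dimensional steps, since intersecting a chain with one-dimensional steps with $W_x$ raises the dimension by at most one at each step. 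We obtain
\[
\prod_{x}T_{n,m_x(\mathcal L)}(q)=\sum_{S'\in\mathcal T}(-1)^{|S'|}q^{-n|S'|},
\qquad
\mathcal T=\mathcal S\bigl((\overline Q^{\mathcal F_{\restriction W_x}})_{x\in X}\bigr).
\]
This is a sum of the form $\sum_{S'\in\mathcal T}s^{|S'|}$ with $s=-q^{-n}$, and $\emptyset\in\mathcal T$. Lemma~\ref{invertingL} then gives its inverse as
\[
\sum_{(S_1,\dots,S_l)}(-1)^l(-1)^{\sum|S_j|}q^{-n\sum|S_j|},
\]
where the tuples range over $(\mathcal T\setminus\{\emptyset\})^{<\omega}$.

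Multiplying the two series, the product is indexed precisely by pairs $T=(S,(S_j)_{j=1}^l)\in\mathbb T_{\mathcal L}^{\mathcal F}$. The sign of each term is $(-1)^{|S|+\sum|S_j|+l}=\epsilon(T)$, and the power of $q$ is $q^{-|T|n}$. This yields the stated formula.

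The main obstacle is convergence. The sum over $\mathbb T_{\mathcal L}^{\mathcal F}$ is infinite, because tuples of arbitrary length occur. The identity of Lemma~\ref{invertingL} holds in $\mathbb Z[[s]]$, so we must check that it can be evaluated at $s=-q^{-n}$. The number of elements of $\mathcal T$ of size $j$ is at most $\binom{d}{j}\bigl(\max_x|\mathbb P(W_x)|\bigr)^j$. Hence the power series $g(s)=\sum_{S'\in\mathcal T\setminus\{\emptyset\}}|s|^{|S'|}$ is a polynomial with $g(q^{-n})<1$ for all large $n$. Under this condition the geometric series $\sum_l(-1)^l(\cdots)^l$ converges absolutely, so the formal identity holds numerically. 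The rearrangement into a single sum over $\mathbb T_{\mathcal L}^{\mathcal F}$ is then justified by absolute convergence, and the formula is valid for all sufficiently large $n$ (which is all that is needed, or, alternatively, the statement can be read as an identity of formal power series in $q^{-n}$).
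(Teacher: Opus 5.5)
Your argument is correct and is essentially the paper's proof: expand numerator and denominator of Proposition~\ref{p: first formula for L} via Lemma~\ref{l: expanding the Ts}, invert the denominator with Lemma~\ref{invertingL} at $s=-q^{-n}$, and multiply. Your convergence discussion is a welcome addition that the paper omits; it gives the identity for all large $n$, which is how it is used in Proposition~\ref{formula for C}.

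One small slip: the bound $\binom{d}{j}\bigl(\max_x|\mathbb P(W_x)|\bigr)^j$ undercounts. The family indexing $\mathcal T$ has $\sum_x m_x(\mathcal L)$ layers, not $d$, so an element of $\mathcal T$ can contain up to $m_x(\mathcal L)$ points over a single $x$. This is harmless, since all you need is that $\mathcal T$ is finite and independent of $n$. Then $g$ is a fixed polynomial without constant term, so $g(q^{-n})\to 0$.
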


\begin{proof}
By Proposition~\ref{p: first formula for L} and
Lemma~\ref{l: expanding the Ts},
\[
q^{\chi(\mathcal N_1)n}
L_n(\mathcal N_1,\mathcal N_2)
=
\frac{
\displaystyle
\sum_{S\in\mathcal S(\overline Q^{\mathcal F})}
(-1)^{|S|}q^{-n|S|}
}{
\displaystyle
\sum_{
S\in
\mathcal S
((\overline Q^{\mathcal F_{\restriction W_x}})_{x\in X})
}
(-1)^{|S|}q^{-n|S|}
}.
\]
Applying Lemma~\ref{invertingL} with $s=-q^{-n}$ gives
\begin{align*}
&
\left(
\sum_{
S\in
\mathcal S
((\overline Q^{\mathcal F_{\restriction W_x}})_{x\in X})
}
(-1)^{|S|}q^{-n|S|}
\right)^{-1}
\\
&\qquad=
\sum_{
T\in
\left(
\mathcal S
((\overline Q^{\mathcal F_{\restriction W_x}})_{x\in X})
\setminus\{\emptyset\}
\right)^{<\omega}
}
(-1)^{|T|+l(T)}q^{-n|T|}.
\end{align*}
Multiplying the two series yields the desired formula.
\end{proof}

The following simple observation will be useful.

\begin{lem}\label{o: restriction filtration}
Let $V\leq U$ be finite-dimensional $K$-vector spaces, and let
$\mathcal F$ be a filtration of $U$. Then
\[
\mathcal S(\overline Q^{\mathcal F})
\cap
\mathcal P(\mathbb P(V))
=
\mathcal S(\overline Q^{\mathcal F_{\restriction V}}),
\]
where the left-hand side consists of those elements of
$\mathcal S(\overline Q^{\mathcal F})$ all of whose projective points
belong to $\mathbb P(V)$.
\end{lem}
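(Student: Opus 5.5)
We have to show that a set $S\in\mathcal S(\overline Q^{\mathcal F})$ has all its points in $\mathbb P(V)$ exactly when it is an element of $\mathcal S(\overline Q^{\mathcal F_{\restriction V}})$, where each point is labelled by its level in the appropriate filtration. The idea is to compare levels directly. Write $\mathcal F=(U_0<\cdots<U_{d(U)})$, and let $k_1<\cdots<k_{d(V)}$ be the indices at which $d(U_k\cap V)$ jumps. By definition, $\mathcal F_{\restriction V}$ is the chain $V_j=U_{k_j}\cap V$, and $U_k\cap V=V_j$ whenever $k_j\le k<k_{j+1}$.

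The key step is to identify, for a point $Kv\in\mathbb P(V)$, its level in $\mathcal F$ with its level in $\mathcal F_{\restriction V}$. Let $k$ be the unique index with $v\in U_k\setminus U_{k-1}$. Then $v\in (U_k\cap V)\setminus(U_{k-1}\cap V)$, which forces $d(U_k\cap V)>d(U_{k-1}\cap V)$. Hence $k=k_j$ for some $j$, and $v\in V_j\setminus V_{j-1}$. Conversely, if $v\in V_j\setminus V_{j-1}$, then $v\in U_{k_j}$ and $v\notin U_{k_j-1}$, since $U_{k_j-1}\cap V=V_{j-1}$.

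This gives a bijection $Q_j^{\mathcal F_{\restriction V}}\to Q_{k_j}^{\mathcal F}\cap\mathbb P(V)$, and it is the identity on points. Moreover, $Q_k^{\mathcal F}\cap\mathbb P(V)=\emptyset$ whenever $k$ is not one of the $k_j$.

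Now take $S\in\mathcal S(\overline Q^{\mathcal F})$ with all its points in $\mathbb P(V)$. Its element in the copy $Q_k^{\mathcal F}$ can only occur for $k=k_j$, and under the bijection it goes to the copy $Q_j^{\mathcal F_{\restriction V}}$. The condition ``at most one element from each copy'' is preserved, because the map $j\mapsto k_j$ is injective. The reverse direction is symmetric. So the two collections coincide once we identify the disjoint copies via $j\leftrightarrow k_j$, which is exactly the meaning of the statement.

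The only subtlety is this bookkeeping of disjoint copies. I expect no real obstacle beyond checking that the induced filtration reindexes the copies consistently, and that reindexing is what the argument above records.
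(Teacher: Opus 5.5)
Your proof is correct and follows the same route as the paper: you locate the jump indices $k_j$ of $d(U_k\cap V)$, show $Q_j^{\mathcal F_{\restriction V}}=Q_{k_j}^{\mathcal F}\cap\mathbb P(V)$, and transfer the ``at most one point per layer'' condition. Your explicit checks that non-jump layers of $\mathcal F$ miss $\mathbb P(V)$ and that the disjoint copies are reindexed via $j\mapsto k_j$ spell out what the paper's proof leaves implicit.
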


\begin{proof}
Write
\[
\mathcal F=(\{0\}=U_0<U_1<\cdots<U_{d(U)}=U).
\]
After repetitions are removed, the induced filtration of $V$ is
\[
\{0\}=V_0<V_1<\cdots<V_{d(V)}=V.
\]
For each $j$, there is a unique index $i_j$ at which the dimension of
$U_i\cap V$ increases from $j-1$ to $j$. Consequently,
\[
Q_j^{\mathcal F_{\restriction V}}
=
Q_{i_j}^{\mathcal F}\cap\mathbb P(V).
\]
Thus, a subset of $\mathbb P(V)$ contains at most one point from each
layer of the ambient filtration if and only if it contains at most one
point from each layer of the induced filtration.
\end{proof}

\begin{pro}\label{formula for C}
Let
\[
\mathcal N_1\leq\mathcal N_2\leq\mathcal M
\]
be partial $R$-submodules, and let $\mathcal F$ be a filtration of the
base space $M$ of $\mathcal M$. Put
\[
\mathcal L=\mathcal N_2/\mathcal N_1.
\]
Then
\[
C_n(\mathcal N_1,\mathcal N_2)
=
q^{-\chi(\mathcal N_1)n}
\sum_{\substack{
T\in
\mathbb T_{\mathcal M/\mathcal N_1}^{
\mathcal F_{\restriction M/N_1}
}\\
\langle T\rangle=\mathcal L
}}
\epsilon(T)q^{-|T|n}.
\]
In particular,
\[
C_n(\mathcal N_1,\mathcal N_2)
=
O\left(
q^{-n(\chi(\mathcal N_1)+\operatorname{rk}(\mathcal L))}
\right).
\]
If $\mathcal L$ is nontrivial and algebraic, then
\[
C_n(\mathcal N_1,\mathcal N_2)
=
O\left(q^{-n(\chi(\mathcal N_2)+1)}\right).
\]
\end{pro}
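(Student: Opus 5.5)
The plan is to prove the formula for $C_n$ by Möbius inversion, and then read off both bounds from it by counting points.

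\textbf{Reducing to an identity for $L_n$.} Since $C_n=\mu*\Phi_n*\mu=L_n*\mu$, we have $L_n=C_n*\zeta$, that is,
\[
L_n(\mathcal N_1,\mathcal N_2)=\sum_{\mathcal N_1\leq\mathcal N\leq\mathcal N_2}C_n(\mathcal N_1,\mathcal N).
\]
An element of $I(P)$ is determined by its convolution with $\zeta$. So, writing $C'_n(\mathcal N_1,\mathcal N_2)$ for the right-hand side of the claimed formula, it suffices to show
\[
\sum_{\mathcal N_1\leq\mathcal N\leq\mathcal N_2}C'_n(\mathcal N_1,\mathcal N)=L_n(\mathcal N_1,\mathcal N_2)
\]
for every pair $\mathcal N_1\leq\mathcal N_2$. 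The sums over $T$ are infinite, so I would treat all these expressions as formal power series in $s=q^{-n}$, exactly as in Proposition~\ref{L formula}. For each $k$ there are only finitely many $T$ with $|T|=k$, at most $c^k$ for a constant $c$ depending only on $\mathcal M$. Hence every series converges absolutely for $n$ large, and the formal identities give numerical ones.

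\textbf{Matching the index sets.} Partial submodules $\mathcal N$ with $\mathcal N_1\leq\mathcal N\leq\mathcal N_2$ correspond to partial submodules $\mathcal N/\mathcal N_1$ of $\mathcal M/\mathcal N_1$ contained in $\mathcal L=\mathcal N_2/\mathcal N_1$. Grouping $T\in\mathbb T^{\mathcal F_{\restriction M/N_1}}_{\mathcal M/\mathcal N_1}$ according to $\langle T\rangle$, the left-hand side becomes
\[
q^{-\chi(\mathcal N_1)n}\sum\epsilon(T)q^{-|T|n},
\]
taken over those $T$ with $\langle T\rangle\leq\mathcal L$. Equivalently, these are the $T$ all of whose projective points lie in $\mathbb P(N_2/N_1)$.

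\textbf{Identifying this sum with $L_n$.} I then apply Proposition~\ref{L formula} to $\mathcal L$, using the filtration induced on $N_2/N_1$ by $\mathcal F_{\restriction M/N_1}$; this is the same as $\mathcal F_{\restriction N_2/N_1}$. It remains to identify $\mathbb T^{\mathcal F_{\restriction N_2/N_1}}_{\mathcal L}$ with the set of $T$ above.
\begin{itemize}
\item For the $S$-component, this is Lemma~\ref{o: restriction filtration} applied to $N_2/N_1\leq M/N_1$.
\item For the components $S_j$, I use that $\mathcal L$ is a partial submodule of $\mathcal M/\mathcal N_1$. Hence its domain for $x$ is $\overline U_x\cap(N_2/N_1)$, and Lemma~\ref{o: restriction filtration} applies again, to $\overline U_x\cap(N_2/N_1)\leq\overline U_x$.
\end{itemize}
The signs $\epsilon(T)$ and the sizes $|T|$ agree under this identification, which proves the formula.

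\textbf{Bounds.} If $\langle T\rangle=\mathcal L$, the points of $T$ generate $\mathcal L$ as a partial module. So, by the remark following Corollary~\ref{submodule generated}, their images generate $U(\mathcal L)$. Hence $|T|\geq r$, where $r=\operatorname{rk}(\mathcal L)$ denotes the minimal number of generators of $U(\mathcal L)$. Combined with the geometric bound $c^k$ on the number of $T$ with $|T|=k$, this gives
\[
C_n(\mathcal N_1,\mathcal N_2)=O\bigl(q^{-n(\chi(\mathcal N_1)+r)}\bigr).
\]
For the algebraic case, write $U(\mathcal L)=R^r/D$ with $D$ free. Then $\chi(\mathcal L)=r-\operatorname{rk}D\leq r$, with equality only if $D=0$, in which case $U(\mathcal L)$ is free. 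Since $\mathcal L$ is nontrivial and algebraic, $U(\mathcal L)$ is not free, so $r\geq\chi(\mathcal L)+1=\chi(\mathcal N_2)-\chi(\mathcal N_1)+1$. Substituting this into the previous bound gives $O\bigl(q^{-n(\chi(\mathcal N_2)+1)}\bigr)$.

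\textbf{Main obstacle.} The step needing most care is the bookkeeping for the induced filtrations on the domains $W_x$. One must check that restricting first to $M/N_1$ and then to $N_2/N_1$, or to the $x$-domains, yields the same filtrations that Proposition~\ref{L formula} uses for $\mathcal L$. This relies on $\mathcal L$ being a partial submodule, so that its $x$-domains are exactly intersections with the ambient $x$-domains.
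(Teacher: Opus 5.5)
Your proposal is correct and follows essentially the same route as the paper. You use $L_n=C_n*\zeta$, group the terms of Proposition~\ref{L formula} by $\langle T\rangle$ via Lemma~\ref{o: restriction filtration} (the paper phrases this as independence of the ambient $\mathcal N_3$), and bound via $|T|\geq\operatorname{rk}(\mathcal L)$ and $\operatorname{rk}(\mathcal L)\geq\chi(\mathcal L)+1$ in the algebraic case; your exponential count of the $T$ with $|T|=k$ (justifying convergence and the $O$-bounds) and your presentation argument for $\operatorname{rk}(\mathcal L)\geq\chi(\mathcal L)+1$ spell out steps the paper leaves implicit.
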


\begin{proof}
Let
\[
\mathcal N_1\leq\mathcal N_2\leq\mathcal N_3\leq\mathcal M,
\]
and denote their base spaces by
$N_1\leq N_2\leq N_3$, respectively. Define
\[
\widetilde C_n
(\mathcal N_1,\mathcal N_2,\mathcal N_3)
=
q^{-\chi(\mathcal N_1)n}
\sum_{\substack{
T\in
\mathbb T_{\mathcal N_3/\mathcal N_1}^{
\mathcal F_{\restriction N_3/N_1}
}\\
\langle T\rangle=\mathcal N_2/\mathcal N_1
}}
\epsilon(T)q^{-|T|n}.
\]
By Lemma~\ref{o: restriction filtration},
\begin{align*}
&
\left\{
T\in
\mathbb T_{\mathcal N_3/\mathcal N_1}^{
\mathcal F_{\restriction N_3/N_1}
}
\mid
\langle T\rangle=\mathcal N_2/\mathcal N_1
\right\}
\\
&\qquad=
\left\{
T\in
\mathbb T_{\mathcal N_2/\mathcal N_1}^{
\mathcal F_{\restriction N_2/N_1}
}
\mid
\langle T\rangle=\mathcal N_2/\mathcal N_1
\right\}.
\end{align*}
Hence
\[
\widetilde C_n
(\mathcal N_1,\mathcal N_2,\mathcal N_3)
=
\widetilde C_n
(\mathcal N_1,\mathcal N_2,\mathcal N_2),
\]
and we denote this common value simply by
$\widetilde C_n(\mathcal N_1,\mathcal N_2)$.

By Proposition~\ref{L formula}, for all sufficiently large $n$,
\[
L_n(\mathcal N_1,\mathcal N_3)
=
\sum_{\mathcal N_1\leq\mathcal N_2\leq\mathcal N_3}
\widetilde C_n
(\mathcal N_1,\mathcal N_2,\mathcal N_3)
=
\sum_{\mathcal N_1\leq\mathcal N_2\leq\mathcal N_3}
\widetilde C_n(\mathcal N_1,\mathcal N_2).
\]
Comparing this identity with \eqref{derivation phi} gives
\[
C_n(\mathcal N_1,\mathcal N_2)
=
\widetilde C_n(\mathcal N_1,\mathcal N_2).
\]

Every $T$ with $\langle T\rangle=\mathcal L$ satisfies
\[
|T|\geq\operatorname{rk}(\mathcal L),
\]
which proves the first estimate. If $\mathcal L$ is nontrivial and
algebraic, then it cannot be generated by $\chi(\mathcal L)$ or fewer
elements. Hence
\[
|T|\geq\chi(\mathcal L)+1.
\]
Since
\[
\chi(\mathcal N_2)
=
\chi(\mathcal N_1)+\chi(\mathcal L),
\]
the second estimate follows.
\end{proof}

The following is an immediate consequence.

\begin{cor}\label{oright}
Let
\[
\mathcal N_1\leq\mathcal N_2\leq\mathcal M
\]
be a chain of partial $R$-submodules, and assume that
$\mathcal N_2/\mathcal N_1$ is algebraic. Then
\[
R_n(\mathcal N_1,\mathcal N_2)
=
q^{-\chi(\mathcal N_2)n}
+
O\left(q^{-n(\chi(\mathcal N_2)+1)}\right).
\]
\end{cor}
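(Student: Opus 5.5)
The plan is to expand $R_n(\mathcal N_1,\mathcal N_2)$ as a sum of $C_n$ values over the interval $[\mathcal N_1,\mathcal N_2]$, using the identity in \eqref{derivation phi}:
\[
R_n(\mathcal N_1,\mathcal N_2)=\sum_{\mathcal N_1\leq\mathcal N\leq\mathcal N_2}C_n(\mathcal N_1,\mathcal N).
\]
We then isolate the diagonal term $\mathcal N=\mathcal N_1$ and bound all the others using Proposition~\ref{formula for C}.

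For the diagonal term, $\mathcal L=\mathcal N_1/\mathcal N_1$ is trivial. In the formula of Proposition~\ref{formula for C}, the tuples $T$ with $\langle T\rangle=\{0\}$ are exactly those with $S=\emptyset$ and $l=0$, since every $S_j$ is nonempty and generates a nonzero submodule. Hence
\[
C_n(\mathcal N_1,\mathcal N_1)=q^{-\chi(\mathcal N_1)n}.
\]
This does not yet look like the claimed main term $q^{-\chi(\mathcal N_2)n}$, so an extra step is needed. I would first reduce to the case $\chi(\mathcal N_2)=\chi(\mathcal N_1)$. Otherwise the algebraic hypothesis on $\mathcal N_2/\mathcal N_1$ should force $\chi(\mathcal N_2/\mathcal N_1)\le 0$, or at least make the $q^{-\chi(\mathcal N_1)n}$ term be reabsorbed.

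On reflection, a cleaner route is to use the other expansion. Write
\[
R_n=\Phi_n*\mu=L_n*\mu\ \text{(with the appropriate ordering)},
\]
or equivalently $C_n=\mu*R_n$, so that
\[
R_n(\mathcal N_1,\mathcal N_2)=\sum_{\mathcal N_1\le \mathcal N\le\mathcal N_2}C_n(\mathcal N_1,\mathcal N),
\]
and for each $\mathcal N$ strictly above $\mathcal N_1$ bound $C_n(\mathcal N_1,\mathcal N)$ by $O(q^{-n(\chi(\mathcal N_1)+\operatorname{rk}(\mathcal N/\mathcal N_1))})$. The genuinely needed comparison is then
\[
\chi(\mathcal N_1)+\operatorname{rk}(\mathcal N/\mathcal N_1)\ \ge\ \chi(\mathcal N_2)+1 \quad\text{for } \mathcal N\ne\mathcal N_1,
\]
together with an identification of the main term. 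To identify the main term, I would use the full expression from Proposition~\ref{formula for C} summed over $\mathcal N$. This gives
\[
R_n(\mathcal N_1,\mathcal N_2)=q^{-\chi(\mathcal N_1)n}\sum_{T:\,\langle T\rangle\le \mathcal N_2/\mathcal N_1}\epsilon(T)q^{-|T|n},
\]
which is precisely the $L_n$-type generating function computed for $\mathcal N_2/\mathcal N_1$ and therefore equals
\[
q^{-\chi(\mathcal N_1)n}\,\frac{T_{n,m}}{\prod_x T_{n,m_x}}.
\]
That last expression is $1+O(q^{-n})$ times $q^{-\chi(\mathcal N_1)n}$, which is the wrong exponent. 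So this route conflicts with the claim, and I would recheck conventions: the relevant main term must come from the $C_n(\mathcal N,\mathcal N_2)$ expansion with $\mathcal N=\mathcal N_2$. Thus I would use
\[
R_n(\mathcal N_1,\mathcal N_2)=\Phi_n*\mu=\sum_{\mathcal N}C_n(\mathcal N,\mathcal N_2)\ \text{summed via } L_n,
\]
and take the diagonal term $C_n(\mathcal N_2,\mathcal N_2)=q^{-\chi(\mathcal N_2)n}$.

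The structure of the argument is therefore as follows. First, write $R_n(\mathcal N_1,\mathcal N_2)$ as $\sum_{\mathcal N_1\le\mathcal N\le\mathcal N_2}C_n(\mathcal N,\mathcal N_2)$, reindexing the convolutions so that $C_n$ terms ending at $\mathcal N_2$ appear. Second, observe that the term $\mathcal N=\mathcal N_2$ gives exactly $q^{-\chi(\mathcal N_2)n}$. Third, for $\mathcal N<\mathcal N_2$, use Proposition~\ref{formula for C}: if $\mathcal N_2/\mathcal N$ is algebraic and nontrivial, the term is $O(q^{-n(\chi(\mathcal N_2)+1)})$. If $\mathcal N_2/\mathcal N$ is not algebraic, then it has a nontrivial free quotient, and the $\operatorname{rk}$-bound gives $\operatorname{rk}(\mathcal N_2/\mathcal N)\ge\chi(\mathcal N_2/\mathcal N)+1$ whenever $\mathcal N_2/\mathcal N$ is not free. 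The free case has to be handled separately, by arguing that such contributions cancel or vanish using Proposition~\ref{rightnonalgebraic} applied to the algebraic quotient $\mathcal N_2/\mathcal N_1$.

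The main obstacle is this last point. Quotients $\mathcal N_2/\mathcal N$ that are free would give terms of order $q^{-\chi(\mathcal N_2)n}$ as well, and the algebraicity of $\mathcal N_2/\mathcal N_1$ must be used to rule them out. I would argue as follows: $\mathcal N_2/\mathcal N$ is a quotient of the algebraic module $U(\mathcal N_2/\mathcal N_1)$, by Lemma~\ref{submod} and Corollary~\ref{submod2}. An algebraic module has no nontrivial free quotient, so $\mathcal N_2/\mathcal N$ free forces $\mathcal N=\mathcal N_2$. This also covers the non-algebraic case, since any nontrivial quotient of an algebraic module is non-free, which gives the required strict inequality $|T|\ge\chi+1$.
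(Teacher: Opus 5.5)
Your final argument is correct and is the paper's proof: expand $R_n(\mathcal N_1,\mathcal N_2)=\sum_{\mathcal N_1\le\mathcal N\le\mathcal N_2}C_n(\mathcal N,\mathcal N_2)$, keep the diagonal term $C_n(\mathcal N_2,\mathcal N_2)=q^{-\chi(\mathcal N_2)n}$, and bound every other term by Proposition~\ref{formula for C}, since for $\mathcal N_1\le\mathcal N<\mathcal N_2$ the quotient $\mathcal N_2/\mathcal N$ is a nontrivial quotient of an algebraic module, hence algebraic and in particular non-free. The expansion itself is $R_n=\zeta*C_n$, which follows from $C_n=\mu*R_n$; your sanity check rightly showed that $\sum C_n(\mathcal N_1,\mathcal N)$ is the expansion of $L_n$, and in fact the $R_n$- and $L_n$-expansions in the display \eqref{derivation phi} are interchanged. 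In a written proof, drop the early detours (reducing to $\chi(\mathcal N_2)=\chi(\mathcal N_1)$, invoking Proposition~\ref{rightnonalgebraic}) and the algebraic/non-algebraic case split, since quotients of algebraic modules are algebraic.
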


\begin{proof}
By \eqref{derivation phi},
\[
R_n(\mathcal N_1,\mathcal N_2)
=
\sum_{\mathcal N_1\leq\mathcal N\leq\mathcal N_2}
C_n(\mathcal N,\mathcal N_2).
\]
Proposition~\ref{formula for C} gives
\[
C_n(\mathcal N_2,\mathcal N_2)
=
q^{-\chi(\mathcal N_2)n}.
\]
For
$\mathcal N_1\leq\mathcal N<\mathcal N_2$, the quotient
$\mathcal N_2/\mathcal N$ is nontrivial and algebraic, because it is a
quotient of the algebraic partial module
$\mathcal N_2/\mathcal N_1$. Therefore,
\[
C_n(\mathcal N,\mathcal N_2)
=
O\left(q^{-n(\chi(\mathcal N_2)+1)}\right),
\]
and the result follows.
\end{proof}

We extend the notation introduced in the Introduction by defining
\[
f(\mathcal M)
=
\begin{cases}
+\infty,
&\text{if $\mathcal M$ is free},\\[2mm]
\displaystyle
\min\{\chi(\mathcal N)\mid\mathcal N\in A(\mathcal M)\},
&\text{otherwise},
\end{cases}
\]
and
\[
\operatorname{Crit}(\mathcal M)
=
\{
\mathcal N\in A(\mathcal M)
\mid
\chi(\mathcal N)=f(\mathcal M)
\}.
\]

\begin{cor}\label{uM}
Let
\[
\mathcal N_1\leq\mathcal N_2\leq\mathcal M
\]
be a chain of partial $R$-submodules, and put
\[
\mathcal L=\mathcal N_2/\mathcal N_1.
\]
Then
\begin{align}
\Phi_n(\mathcal N_1,\mathcal N_2)
&=
q^{-\chi(\mathcal N_1)n}
+
\sum_{\mathcal N\in A(\mathcal L)}
q^{-(\chi(\mathcal N_1)+\chi(\mathcal N))n}
\bigl(1+O(q^{-n})\bigr)
\notag\\
&=
q^{-\chi(\mathcal N_1)n}
+
|\operatorname{Crit}(\mathcal L)|
q^{-(\chi(\mathcal N_1)+f(\mathcal L))n}
\bigl(1+O(q^{-n})\bigr).
\label{phi-asymptotic}
\end{align}

In particular,
\begin{align}
\label{lambda-partial-asymptotic}
\Lambda_{\mathcal M}(n)
&=
q^{\chi(\mathcal M)n}
+
\sum_{\mathcal N\in A(\mathcal M)}
q^{\chi(\mathcal M/\mathcal N)n}
\bigl(1+O(q^{-n})\bigr)\notag \\ &=
q^{\chi(\mathcal M)n}
+
|\operatorname{Crit}(\mathcal M)|
q^{(\chi(\mathcal M)-f(\mathcal M))n}
\bigl(1+O(q^{-n})\bigr).
\end{align}
\end{cor}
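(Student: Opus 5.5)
The plan is to expand $\Phi_n$ through $R_n$ using \eqref{derivation phi}:
\[
\Phi_n(\mathcal N_1,\mathcal N_2)=\sum_{\mathcal N_1\leq\mathcal N'\leq\mathcal N_2}R_n(\mathcal N_1,\mathcal N').
\]
By Proposition~\ref{rightnonalgebraic}, only the terms with $\mathcal N'/\mathcal N_1$ algebraic survive. The term $\mathcal N'=\mathcal N_1$ has the trivial quotient, which counts as algebraic. Since the interval $[\mathcal N_1,\mathcal N_1]$ is a single point, this term is
\[
R_n(\mathcal N_1,\mathcal N_1)=\Phi_n(\mathcal N_1,\mathcal N_1)=q^{-\chi(\mathcal N_1)n},
\]
because $\mathcal N_1/\mathcal N_1$ is free of rank zero. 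For the remaining terms, $\mathcal N'/\mathcal N_1$ is a nontrivial algebraic partial submodule of $\mathcal L$. Corollary~\ref{oright} gives each such term as $q^{-\chi(\mathcal N')n}(1+O(q^{-n}))$.

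Next we identify the index set. The partial submodules $\mathcal N'$ with $\mathcal N_1\leq\mathcal N'\leq\mathcal N_2$ correspond bijectively to partial submodules $\mathcal N=\mathcal N'/\mathcal N_1$ of $\mathcal L$. Under this correspondence the quotient partial module is literally $\mathcal N'/\mathcal N_1$, so the set of nontrivial algebraic ones is exactly $A(\mathcal L)$. We then need additivity of $\chi$, namely $\chi(\mathcal N')=\chi(\mathcal N_1)+\chi(\mathcal N'/\mathcal N_1)$. This can be checked directly from $\chi=m-\sum_x m_x$, using
\[
\dim\bigl((U_x+N_1)/N_1\bigr)=\dim U_x-\dim(U_x\cap N_1).
\]
Alternatively it follows from the short exact sequence obtained from Lemma~\ref{submod} and Corollary~\ref{submod2}. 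This gives the first line of \eqref{phi-asymptotic}. The sum is finite because $P$ is finite.

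For the second line, group the terms by $\chi(\mathcal N)$. Every $\mathcal N\in A(\mathcal L)$ satisfies $\chi(\mathcal N)\geq f(\mathcal L)$, and $\chi$ is integer-valued. Hence the terms with $\chi(\mathcal N)>f(\mathcal L)$ are $O(q^{-(\chi(\mathcal N_1)+f(\mathcal L)+1)n})$ and are absorbed into the error factor. The terms with $\chi(\mathcal N)=f(\mathcal L)$ are exactly the members of $\operatorname{Crit}(\mathcal L)$. If $\mathcal L$ is free, then $A(\mathcal L)=\varnothing$ and both sums vanish, which is consistent with the convention $f=+\infty$. The one point that needs care is the claim that $A(\mathcal L)=\varnothing$ when $\mathcal L$ is free. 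Proposition~\ref{rightnonalgebraic} makes this harmless anyway: in that case $\Phi_n(\mathcal N_1,\mathcal N_2)=\Phi_n(\mathcal N_1,\mathcal N_1)$, so the formula holds regardless.

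Finally, specialise to $\mathcal N_1=0$ and $\mathcal N_2=\mathcal M$. Here $\mathcal L=\mathcal M$ and $\chi(0)=0$, and by definition
\[
\Lambda_{\mathcal M}(n)=q^{\chi(\mathcal M)n}\Phi_n(0,\mathcal M).
\]
Multiplying through and using $\chi(\mathcal M)-\chi(\mathcal N)=\chi(\mathcal M/\mathcal N)$ gives \eqref{lambda-partial-asymptotic}.

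The main subtle step is the identification of the surviving index set with $A(\mathcal L)$, together with the additivity of $\chi$ along $\mathcal N_1\leq\mathcal N'$. Both are elementary given the earlier exactness results.
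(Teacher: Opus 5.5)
Your proposal is correct and follows the paper's proof essentially step for step: you expand $\Phi_n$ through $R_n$, discard the non-algebraic terms by Proposition~\ref{rightnonalgebraic}, estimate the surviving terms with Corollary~\ref{oright}, use additivity of $\chi$, collect the terms of minimal Euler characteristic, and specialise to $\mathcal N_1=\{0\}$, $\mathcal N_2=\mathcal M$. You also spell out details the paper leaves implicit, and they check out: the exact diagonal term $R_n(\mathcal N_1,\mathcal N_1)=q^{-\chi(\mathcal N_1)n}$, the bijection between the interval $[\mathcal N_1,\mathcal N_2]$ and partial submodules of $\mathcal L$, the direct verification of $\chi(\mathcal N')=\chi(\mathcal N_1)+\chi(\mathcal N'/\mathcal N_1)$, and the free case.
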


\begin{proof}
By \eqref{derivation phi},
\[
\Phi_n(\mathcal N_1,\mathcal N_2)
=
\sum_{\mathcal N_1\leq\mathcal Q\leq\mathcal N_2}
R_n(\mathcal N_1,\mathcal Q).
\]
By Proposition~\ref{rightnonalgebraic}, the only nonzero terms are the
diagonal term $\mathcal Q=\mathcal N_1$ and those for which
$\mathcal Q/\mathcal N_1$ is algebraic. Corollary~\ref{oright} gives
\[
R_n(\mathcal N_1,\mathcal Q)
=
q^{-\chi(\mathcal Q)n}
\bigl(1+O(q^{-n})\bigr)
\]
for every such $\mathcal Q$. Since
\[
\chi(\mathcal Q)
=
\chi(\mathcal N_1)
+
\chi(\mathcal Q/\mathcal N_1),
\]
we obtain \eqref{phi-asymptotic}. The reformulation in terms of
$\operatorname{Crit}(\mathcal L)$ follows by collecting the terms with
minimal Euler characteristic.

Finally,
\[
\Lambda_{\mathcal M}(n)
=
q^{\chi(\mathcal M)n}
\Phi_n(\{0\},\mathcal M),
\]
and therefore \eqref{lambda-partial-asymptotic} follows from
\eqref{phi-asymptotic}.
\end{proof}

We conclude the section with the proof of
Theorem~\ref{teo:average}.

\begin{proof}[Proof of Theorem~\ref{teo:average}]
By Proposition~\ref{presentationpartial}, there exists a
finite-dimensional partial $R$-module $\mathcal M$ such that
\[
L\cong U(\mathcal M).
\]
By the universal property of $U(\mathcal M)$,
\[
\Lambda_L(n)
=
\Lambda_{U(\mathcal M)}(n)
=
\Lambda_{\mathcal M}(n).
\]
Proposition~\ref{p: first formula for L}, together with
\eqref{derivation phi}, shows that $\Lambda_L(n)$ agrees with a
rational function of $q^n$ for all sufficiently large $n$.

By Proposition~\ref{freealgebraic}, every algebraic $R$-submodule
$N$ of $U(\mathcal M)$ is generated by $N\cap M$. Moreover,
Corollary~\ref{submod2} identifies $N$ with
\[
U(\mathcal M^{N\cap M}).
\]
Consequently, the assignment
\[
N\longmapsto\mathcal M^{N\cap M}
\]
gives a bijection between $A(U(\mathcal M))$ and $A(\mathcal M)$.
Under this correspondence,
\[
U(\mathcal M)/N
\cong
U\bigl(\mathcal M/\mathcal M^{N\cap M}\bigr).
\]
In particular, $A(L)$ is finite and, by
Corollary~\ref{uM},
\begin{align*}
\Lambda_L(n)
&=
\Lambda_{U(\mathcal M)}(n)
=
\Lambda_{\mathcal M}(n)\\
&=
q^{\chi(\mathcal M)n}
+
\sum_{\mathcal N\in A(\mathcal M)}
q^{\chi(\mathcal M/\mathcal N)n}
\bigl(1+O(q^{-n})\bigr)\\
&=
q^{\chi(U(\mathcal M))n}
+
\sum_{N\in A(U(\mathcal M))}
q^{\chi(U(\mathcal M)/N)n}
\bigl(1+O(q^{-n})\bigr)\\
&=
q^{\chi(L)n}
+
\sum_{N\in A(L)}
q^{\chi(L/N)n}
\bigl(1+O(q^{-n})\bigr).
\end{align*}
\end{proof}

  \section{The calculation of $\mathbb E_w[{\rm fix}]$}
   In this section, we show how the calculations from the previous section recover formula~\eqref{crit2} for \(\mathbb{E}_w[\operatorname{fix}]\). Without loss of generality, we may assume that \(w\) is cyclically reduced.
   Let \(\L\) be the canonical partial module whose envelope is \[ \F_q[F]/(w-1)\F_q[F], \] and whose underlying 
   \(\F_q\)-vector space has a basis consisting of the prefixes of \(w\). 
   First, recall that \[ \mathbb{E}_w[\operatorname{fix}] = \Lambda_{\F_q[F]/(w-1)\F_q[F]}(n) = \Lambda_{\L}(n) = q^n\Phi_n(\{0\},\L). \] 
   We then obtain \begin{align} \Phi_n(\{0\},\L) &\myeq{\eqref{derivation phi} and Proposition~\ref{rightnonalgebraic}} R_n(\{0\},\{0\}) + \sum_{\substack{\Q\in A(\L)\\ \Q\leq \L}} R_n(\{0\},\Q) \\ &\myeq{Corollary~\ref{oright}} q^{-n} + R_n(\{0\},\L) + \lvert\CR_q^2(w)\rvert q^{-2n} + O(q^{-3n}). \end{align} 
   By~\eqref{derivation phi}, we have \[ R_n(\{0\},\L) = \sum_{\mathcal{N}\leq \L} C_n(\mathcal{N},\L). \] 
   Using Proposition~\ref{formula for C}, we obtain: 
   \begin{enumerate} 
   \item[(a)] \( C_n(\L,\L)=q^{-n}. \)
   \item[(b)] 
   \( C_n(\mathcal{N},\L)=O(q^{-3n}) \qquad \text{if } \{0\}<\mathcal{N}<\L. \)
   \item[(c)] \(C_n(\{0\},\L)=O(q^{-3n}). \) \end{enumerate} 
   To prove the last estimate, we consider the coefficient of \(q^{-n}\) in the sum 
   \[C_n(0,\L)=q^{-n} \sum_{\substack{ T\in\mathbb{T}_{\L}^{\mathcal F}\\ \langle T\rangle=\L }} \epsilon(T)q^{-\lvert T\rvert n}. \]
   This coefficient corresponds to generating sets of \(\L\) consisting of a single element. 
   By~\cite{EPS21}, every generator of \(\L\) is represented by a group element. 
   Observe that each such group element contributes once with sign \(+1\) and once with sign \(-1\). 
   Therefore, the coefficient of \(q^{-n}\) vanishes, and hence 
   \[ \sum_{\substack{ T\in\mathbb{T}_{\L}^{\mathcal F}\\ \langle T\rangle=\L }} \epsilon(T)q^{-\lvert T\rvert n} = O(q^{-2n}). \] 
   Consequently, \[ R_n(\{0\},\L) = q^{-n}+O(q^{-3n}). \] It follows that \[ \mathbb{E}_w[\operatorname{fix}] = q^n\Phi_n(\{0\},\L) = 2+\lvert\CR_q^2(w)\rvert q^{-n} +O(q^{-2n}), \] which recovers formula~\eqref{crit2}.

\end{document}